\documentclass[11pt,reqno]{amsart}
\usepackage{graphicx} 
\usepackage{hyperref}
\usepackage{fourier}
\usepackage{fullpage}
\usepackage{amsmath,amssymb,amsthm}
\usepackage[shortlabels]{enumitem}
\usepackage[mathscr]{euscript}
\usepackage{dutchcal}
\usepackage{upgreek}
\usepackage{comment}
\usepackage{csquotes}
\usepackage{tikz}
\allowdisplaybreaks

\makeatletter
\def\resetMathstrut@{%
  \setbox\z@\hbox{%
    \mathchardef\@tempa\mathcode`\(\relax
    \def\@tempb##1"##2##3{\the\textfont"##3\char"}%
    \expandafter\@tempb\meaning\@tempa \relax
  }%
  \ht\Mathstrutbox@1.2\ht\z@ \dp\Mathstrutbox@1.2\dp\z@
}
\makeatother

\newtheorem{theorem}{Theorem}
\newtheorem{lemma}[theorem]{Lemma}
\newtheorem{prop}[theorem]{Proposition}

\newtheorem{corollary}[theorem]{Corollary}

\newtheorem{conjecture}[theorem]{Conjecture}

\newtheorem{remark}[theorem]{Remark}

\newcommand{\R}{\mathbb{R}}
\newcommand{\Z}{\mathbb{Z}}
\newcommand{\N}{\mathbb{N}}

\newcommand{\norm}[1]{\| #1 \|}
\newcommand{\lesim}{\lesssim}

\renewcommand{\le}{\leqslant}
\renewcommand{\ge}{\geqslant}

\renewcommand{\setminus}{\smallsetminus}
\renewcommand{\subset}{\subseteq}
\newcommand{\cc}{\mathsf{c}}

\newcommand{\eqdef}{\stackrel{\mathrm{def}}{=}}

\newcommand{\vol}{\mathrm{vol}}

\renewcommand{\i}{\mathsf{i}}

\newcommand{\abs}[1]{\left\lvert #1 \right\rvert}

\newcommand{\X}{\mathbf X}
\newcommand{\Y}{\mathbf Y}
\newcommand{\Proj}{\mathsf{Proj}}

\newcommand{\NN}{\mathcal{N}}
\newcommand{\MM}{\mathcal{M}}

\newcommand{\sub}{\mathscr{C}}

\newcommand{\e}{\varepsilon}
\newcommand{\f}{\varphi}
\renewcommand{\d}{\delta}
\newcommand{\ud}[0]{\,\mathrm{d}}

\newcommand{\1}{\mathbf{1}}

\renewcommand{\nu}{\upnu}

\newcommand{\fS}{\mathfrak{S}}

\title{A threshold phenomenon for embeddings of Euclidean snowflakes and  impossibility of dimension reduction}

\date{}

\author{Assaf Naor}
\address{Department of Mathematics, Princeton NJ 08544-1000}
\email{naor@math.princeton.edu}

\author{Kevin Ren}
\address{Department of Mathematics, Princeton NJ 08544-1000}
\email{kr5621@princeton.edu}

\thanks{A.~N.~was supported  by NSF grant DMS-2453936 and a Simons Investigator award. K.~R.~was supported by an NSF GRFP fellowship, a Simons
Foundation Dissertation Fellowship in Mathematics, and a Cubist/Point72 PhD Fellowship.  }

\usepackage{xcolor}
\definecolor{maroon}{HTML}{AF3235}

\begin{document}

\begin{abstract} Fix $0<\theta\le 1$. We prove that if $1\le p \le 2/\theta$, then the $\theta$-snowflake of $\ell_2^k$, namely, $\R^k$ equipped with the metric $((x,y)\in \R^k\times \R^k)\mapsto \|x-y\|_2^\theta$,  embeds with distortion $O(1)$ into $\ell_p^m$ for some integer $m\lesssim_{p,\theta}k$, which is optimal as $k\to \infty$, as seen by comparing dimensions. However, for $p$ larger than the sharp threshold $2/\theta$ the following change in behavior occurs: If a $(1/\sqrt{k})$-dense subset of the Euclidean sphere     $S^{k-1}$ embeds into $\ell_p^m$ with  distortion $O(1)$, then necessarily $m\gtrsim_{p,\theta}( k/\log k)^{p\theta/2}$, which grows super-linearly in $k$ as $p\theta/2>1$, and this dimension bound is optimal as $k\to \infty$ up to lower order factors. We deduce from this statement that if $2<p<\infty$, then there exist arbitrarily large $n$-point subsets of $\ell_p$ with the property that if they embed with distortion $O(1)$ into $\ell_p^m$, then necessarily $m\gtrsim_p ((\log n)/(\log\log n)^2)^{p/2}$, thus demonstrating that the statement of the Johnson--Lindenstrauss dimension reduction lemma  fails to hold for $\ell_p$ .
\end{abstract}

\maketitle

\vspace{-0.3in}




\section{Introduction}

Given $D\ge 1$, a metric space $(\MM,d_\MM)$ is said to embed with distortion $D$ into a metric space $(\NN,d_\NN)$ if there are $f:\MM\to \NN$ and $s>0$ such that $s d_\MM(x,y)\le d_\NN(f(x),f(y))\le Ds d_\MM(x,y)$ for every $x,y\in \MM$. For $0< \theta\le 1$, the metric space $(\MM,d_\MM^\theta)$ is  called the  $\theta$-snowflake of $(\MM,d_\MM)$. If the Hausdorff dimension of $(\MM,d_\MM)$ equals  $\alpha$, then (by its definition) the Hausdorff dimension of its $\theta$-snowflake equals $\alpha/\theta$.\footnote{The (very rudimentary) properties of Hausdorff dimension that are mentioned herein can be found in e.g.~\cite{Rog98}. } Since bi-Lipschitz equivalent metric spaces have the same Hausdorff dimension, if the $\theta$-snowflake of $(\MM,d_\MM)$ embeds into $(\NN,d_\NN)$ with distortion $D$, then the Hausdorff dimension of $(\NN,d_\NN)$ must be at least $\alpha/\theta$.

The above standard comments show  that for every $k,m\in \N$, if the $\theta$-snowflake of $\R^k$ (equipped with any norm) embeds (with any distortion) into some $m$-dimensional normed space $\X$, then necessarily $m\ge k/\theta$. If $1\le p\le 2/\theta$, then the second part of  Theorem~\ref{thm:transition} below demonstrates that this is sharp (up to $p$-dependent constant factors) even when we equip $\R^k$ with the Euclidean metric $\|\cdot\|_2$ and  $\X=\ell_p^m$. However, the first part of Theorem~\ref{thm:transition} demonstrates that $2/\theta$ is the sharp threshold here, namely, for every fixed $2/\theta<p<\infty$  the target dimension $m$ must be at least a quantity that grows as $k\to\infty$  asymptotically faster  than the restriction that the aforementioned Hausdorff dimensional considerations  impose.

\begin{theorem}\label{thm:transition} 
Fix $p\ge 1$, an integer $k\ge 4$, and $0<\theta\le 1$. Suppose that $\MM$ is a $(1/\sqrt{k})$-dense subset of $S^{k-1}$.\footnote{The  notation and terminology that is used herein is standard. For example, given $\e>0$, a subset $\sub$ of a metric space $(\MM,d_\MM)$ is said to be $\e$-dense in $\MM$ if for every $x\in \MM$ there is $y\in \sub$ such that $d_\MM(x,y)\le \e$. Also,  $\|x\|_p=(|x_1|^p+\ldots+|x_n|^p)^{1/p}$ for every $x=(x_1,\ldots,x_n)\in \R^n$, and $\ell_p^n$ denotes the normed space $(\R^n,\|\cdot\|_p)$. The unit Euclidean sphere in $\R^n$ is $S^{n-1}=\{x\in \R^n:\ \|x\|_2=1\}$ and the normalized surface area probability measure on it will be denoted $\sigma_{n-1}$. The following standard asymptotic notation will also be used throughout:  $A\lesssim B$ stands for $A\le c B$ with $c$ a universal constant.  Correspondingly, we write $\lesssim_p$ when $c$ may depend only on $p$, and $A\asymp B$ for
$A\lesssim B\lesssim A$. We also use $O(B)$, $O_p (B)$, $\Omega(B)$ as a shorthand for $\lesim B$, $\lesim_p B$, $\gtrsim B$, respectively. } For every $D\ge1$ and $m\in \N$, if $(\MM,\|\cdot-\cdot\|_2^\theta)$ embeds with distortion $D$ into $\ell_p^m$, then the following lower bound on the target dimension $m$ must hold:
\begin{equation}\label{eq:snow-lower}
    m\ge\frac{1}{(4D)^p}\bigg(\frac{k}{2p\theta\log k+8}\bigg)^{\frac{p\theta}{2}}.
\end{equation}
  Conversely, $(\R^k,\|\cdot-\cdot \|_2^\theta)$ embeds with distortion $O(1)$ into $\ell_p^m$ for some integer $1\le m\lesssim_p (k/\theta)^{\max\left\{1,p\theta/2\right\}}$.
\end{theorem}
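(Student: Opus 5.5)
The lower bound \eqref{eq:snow-lower} should follow from concentration of measure on $S^{k-1}$ applied coordinatewise, and the converse from a multiscale (Assouad-type) construction. For the lower bound, fix $f=(f_1,\ldots,f_m):\MM\to\ell_p^m$ and $s>0$ with
$$s\|x-y\|_2^\theta\le \|f(x)-f(y)\|_p\le Ds\|x-y\|_2^\theta\qquad\text{for all } x,y\in\MM.$$
Each coordinate $f_j$ is then $\theta$-H\"older with constant $Ds$ on $\MM$. Using a (measurable) nearest-point map $\pi:S^{k-1}\to\MM$, I will define $F_j=f_j\circ\pi$ on all of $S^{k-1}$. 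This satisfies $|F_j(x)-F_j(y)|\le Ds(\|x-y\|_2+2/\sqrt k)^\theta$, so it is H\"older up to an additive error of order $Ds\,k^{-\theta/2}$.

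Let $M_j$ be a median of $F_j$ with respect to $\sigma_{k-1}$. By the spherical isoperimetric/concentration inequality, the $r$-neighbourhood of $\{F_j\le M_j\}$ has measure at least $1-e^{-kr^2/2}$, and similarly for $\{F_j\ge M_j\}$. Hence
$$\sigma_{k-1}\big(|F_j-M_j|>Ds(r+2/\sqrt k)^\theta\big)\le 2e^{-kr^2/2}.$$
I choose $r\asymp\sqrt{(2p\theta\log k+8)/k}$, so that the exceptional probability is at most $k^{-p\theta}e^{-4}$, and $2/\sqrt k\lesssim r$ as well. Now take $x,y$ independent and $\sigma_{k-1}$-distributed. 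On one hand, $\|\pi(x)-\pi(y)\|_2\ge \|x-y\|_2-2/\sqrt k$, and $\|x-y\|_2\ge 1$ with probability at least $1/2$ (for $k\ge4$), so
$$\mathbb E\|F(x)-F(y)\|_p^p\gtrsim s^p/2.$$
On the other hand, splitting each $\mathbb E|F_j(x)-F_j(y)|^p$ into the good event and the exceptional one, and using the crude bound $|F_j(x)-F_j(y)|\le Ds\,2^{\theta}\cdot O(1)$ on the exceptional one, gives
$$\mathbb E\|F(x)-F(y)\|_p^p\le m\,(4Ds)^p\big(r^{p\theta}+k^{-p\theta}\big)\lesssim m\,(4Ds)^p r^{p\theta}.$$
Comparing the two bounds yields $m\ge (4D)^{-p}r^{-p\theta}$, which is \eqref{eq:snow-lower}. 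The only care needed is bookkeeping the constants $4$ and $8$; the $\log k$ enters exactly because the tail must be beaten by $r^{p\theta}$.

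For the converse I will construct the embedding scale by scale. For each dyadic scale $2^{-n}$, I take a $2^{-n}$-net of $\R^k$ coloured into $N\lesssim C^k$ colour classes with well-separated points within each class. To each class I attach a bounded $1$-Lipschitz bump sum $\phi_{n,c}(2^n x)$, weighted by $2^{-n\theta}$. The scales are then grouped modulo a number $L\asymp 1/\theta$ of residue classes and summed within each group, so that the coordinates are reused across scales. This is Assouad's lemma with the dimension-efficient summation trick of Naor--Neiman, carried out in $\ell_p$ instead of $\ell_2$.

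When $p\le 2/\theta$, I expect this to give target dimension $O_p(k/\theta)$. The lower bound on distances comes from the scale $2^{-n}\approx\|x-y\|_2$. The upper bound comes from summing the geometric series $\sum_n \min(2^{n}\|x-y\|_2,1)^p2^{-np\theta}$. Here the condition $p\theta\le 2$ is what allows the grouped contributions to be controlled with only $O(1)$ loss; this step is presumably also where Euclidean (Gaussian/random-projection) structure is used to pick the bumps to be linear-type functions in $O(k)$ random directions.

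When $p>2/\theta$, I will instead factor through a larger Euclidean space. I will map $\R^k$ multiscale-wise into $\ell_2^{N}$ with $N\asymp (k/\theta)^{p\theta/2}$ so that, at each scale, the map behaves like a degree-$\asymp p\theta/2$ tensor/polynomial (Veronese-type) feature map, whose dimension is $\binom{k+q}{q}$-like. The aim is to convert the exponent $\theta$ into the exponent $2/p$. I would then apply the first case with exponent $2/p$ and the fact that $\ell_2^N$ embeds into $\ell_p^{O(N)}$ with distortion $O(1)$. The main obstacle is this $p>2/\theta$ construction: making the polynomial feature maps glue correctly across scales with distortion independent of $k$, while keeping the dimension at $(k/\theta)^{p\theta/2}$. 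I would first try to handle it by proving it on a single scale for points on a sphere and then using the same scale-grouping argument as above.
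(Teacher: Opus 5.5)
\noindent\textbf{Lower bound.} Your lower-bound argument works, but it takes a different route from the paper's. The paper passes from $\ell_p^m$ to $\ell_\infty^m$, paying a factor $m^{1/p}$, and McShane-extends the coordinates. It then notes that for \emph{every} $x\in S^{k-1}$ some coordinate of $\Phi(x)-\Phi(-x)$ has size at least $1/(2Dm^{1/p})$. Hence the exceptional set satisfies $E\cup(-E)=S^{k-1}$, and a union bound over the $m$ coordinates produces the $\log m\le\frac{p\theta}{2}\log k$. You instead stay in $\ell_p$, use additivity of $\|\cdot\|_p^p$ over coordinates, and use the nearest-point map $\pi$ so that the lower bound $s\|\pi(x)-\pi(y)\|_2^\theta$ survives.

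As written, your lower estimate degenerates. The event $\|x-y\|_2\ge1$ only gives $\|\pi(x)-\pi(y)\|_2\ge1-2/\sqrt k$, which is $0$ at $k=4$ and otherwise costs a factor $c^{p\theta}$. So you do not land on the constants of \eqref{eq:snow-lower}. Pair $x$ with $-x$ instead of an independent $y$. Then $\|\pi(x)-\pi(-x)\|_2\ge 2-2/\sqrt k\ge1$ for every $x$, so $\mathbb{E}\|F(x)-F(-x)\|_p^p\ge s^p$. Your $r$ satisfies $r\ge\sqrt{8/k}$, so $2(r+2/\sqrt k)^\theta\le 3.42\,r^\theta$, which leaves enough slack to absorb the tail and obtain exactly $(4D)^{-p}r^{-p\theta}$, even with the precise form of L\'evy's inequality.

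Your route can also give more than you use. Integrating the sub-Gaussian tail instead of cutting at a single $r$ gives $\mathbb{E}|F_j-M_j|^p\lesssim(4Ds)^p\big((1+p\theta)/k\big)^{p\theta/2}$. Hence $m\gtrsim(8D)^{-p}\big(k/(1+p\theta)\big)^{p\theta/2}$ with no $\log k$, whereas the paper's union bound intrinsically costs $\log m$.

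\medskip\noindent\textbf{Converse for $p\le 2/\theta$: genuine gap.} You propose running Assouad's multiscale bump construction directly on nets of $\R^k$. But $\R^k$ is $K$-doubling only with $\log K\asymp k$, and the colouring uses $C^k$ colours. Either each colour gets its own coordinate, which gives dimension exponential in $k$. Or colours share coordinates via random vectors as in Lemma~\ref{lem:one sided JL in any norm}, which needs dimension $\gtrsim K^2\log C$ per scale, still exponential in $k$.

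In fact no argument that uses only doubling can work. By Remark~\ref{rem:known sharp assouad}, distortion $O(1)$ in dimension $O((\log K)/\theta)$ is impossible for general $K$-doubling spaces once $\theta\gg1/\log\log K$. The Naor--Neiman bound you invoke has distortion of order $((\log K)/(1-\tau))^{1+\delta}$, which is polynomial in $k$ here. Such sums also degenerate as $\theta\to1$: the coarse-scale part of $\sum_n\min(2^n\|x-y\|_2,1)^p2^{-np\theta}$ carries a factor $\asymp1/(p(1-\theta))$ and diverges at $\theta=1$. The theorem, however, needs distortion $O(1)$ uniformly on $(0,1]$.

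The role you give to $p\theta\le2$ is also misplaced. That series converges for all $\theta<1$ and all $p$. The threshold instead reflects that $\ell_2^k$ embeds linearly into $\ell_{p\theta}^n$ with $n\lesssim k$ iff $p\theta\le2$.

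Your aside about ``linear-type functions in $O(k)$ random directions'' points at the missing idea, which is the heart of the paper's proof: reduce to the line. Your multiscale construction is essentially Proposition~\ref{prop:Xcubed}, but the paper applies it only to $\R$, where $K=O(1)$. For $\theta\ge3/5$ it uses planar fractal curves instead (Proposition~\ref{prop:R2}). This gives the $\theta$-snowflake of $\R$ inside $\ell_2^{O(1/\theta)}$.

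The paper then takes a \emph{linear} $T:\ell_2^k\to\ell_{2\theta}^{O(k)}$ from Theorem~\ref{thm:FLM}. When $\theta<1/2$ this target is a quasi-norm, and the constant $e^{O(1/\theta)}$ becomes $O(1)$ after raising to the power $\theta$. Applying the one-dimensional map to each coordinate of $Tx$ gives squared distances $\asymp\|T(x-y)\|_{2\theta}^{2\theta}\asymp\|x-y\|_2^{2\theta}$ (Lemma~\ref{cor:euclidean arbitrary snowflake}).

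The same coordinatewise device gives Lemma~\ref{lem:dimesnion version of MN}. One uses $h:\R\to\ell_p^{O(p/q)}$ with $\|h(s)-h(t)\|_p^p\asymp|s-t|^q$, which turns $\ell_q^d$ into a $(q/p)$-snowflake inside $\ell_p^{O(pd/q)}$. Composing these pieces yields the cases $1\le p\le2$ and $2\le p\le2/\theta$.

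\medskip\noindent\textbf{Converse for $p>2/\theta$.} Your plan cannot work as stated. To reach exponent $\theta$ by following $g:\R^k\to\ell_2^N$ with a $(2/p)$-snowflake, $g$ would need $\|g(x)-g(y)\|_2\asymp\|x-y\|_2^{p\theta/2}$ with $p\theta/2>1$. Any map on $\R^k$ with an upper H\"older bound of exponent exceeding $1$ is constant, and Veronese or tensor maps do not change the local exponent.

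Moreover, $\ell_2^N$ does not embed into $\ell_p^{O(N)}$ with distortion $O(1)$ when $p>2$. It needs dimension $\gtrsim_pN^{p/2}$ by \cite{PR75,BDGJN77}, and already the case $\theta=1$ of \eqref{eq:snow-lower} rules out $O(N)$.

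The correct intermediate space is $\ell_{p\theta}$, not $\ell_2$. First take a linear embedding $\ell_2^k\to\ell_{p\theta}^d$ with $d\asymp_pk^{p\theta/2}$ (Theorem~\ref{thm:FLM} with $q=p\theta>2$); this is where the exponent $p\theta/2$ in the dimension comes from. Then apply Lemma~\ref{lem:dimesnion version of MN} with $q=p\theta$, which places the $\theta$-snowflake of $\ell_{p\theta}^d$ in $\ell_p^{O(d/\theta)}$.
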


\subsection{Impossibility of dimension reduction} We will next describe consequences of Theorem~\ref{thm:transition}  to   dimension reduction (specifically, impossibility thereof). The Johnson--Lindentrauss (JL) lemma~\cite{JL} shows that any $n$-point subset of $\ell_2$ embeds with $O(1)$ distortion into $\ell_2^{m}$, where $m\lesssim \log n$. The JL lemma  is a result of great importance to, and impact on, multiple pure and applied areas; it would be futile to attempt to fully cover  herein the extensive work that has been done on this topic over the past four decades, and it will also be needlessly repetitive, as it is a very famous and well surveyed area; see e.g.~\cite{Vem04,Naor-ribe,BCFG23,DGLX25}.

An obvious mystery that arises from the JL lemma is whether it (or useful variants of it) hold in other spaces of interest. This question was posed at the inception of the JL lemma~\cite[Problem~3]{JL}, and has since been investigated extensively and repeated in multiple venues (see e.g.~\cite[Question~13]{Naor-ribe} and the discussion immediately following it). In particular, for all $p\in (1,\infty)\setminus \{2\}$ it was unknown if every $n$-point subset of $\ell_p$ embeds with distortion $O(1)$ into $\ell_p^m$ for $m\lesssim \log n$. The case $\theta=1$ of~\eqref{eq:snow-lower} demonstrates that this is not the case when $2<p<\infty$. Indeed, by a quick packing argument (see e.g.~\cite[Lemma~2.6]{MS86}) for every $k\in \N$ there is a $(1/\sqrt{k})$-dense  subset $\MM$ of $S^{k-1}$ such that if we set $n=|\MM|$, then  $\log n\asymp k\log k$. As $\ell_2$ is isometric to a subset of $L_p$ (see e.g.~\cite[Proposition~6.4.12]{AK16}), by~\cite{Bal90} we know that  $\MM$ is isometric to a subset of $\ell_p^{n(n-1)/2}$. Hence, thanks to Theorem~\ref{thm:transition} for arbitrarily large $n\in \N$ there is an $n$-point subset of $\ell_p$ such that if it embeds into $\ell_p^m$ with distortion $O(1)$, then necessarily: 
\begin{equation}\label{eq:JL lower}
m\gtrsim_p \bigg(\frac{\log n}{(\log\log n)^2}\bigg)^{\frac{p}{2}}.
\end{equation}

Since $p>2$, the right hand side of~\eqref{eq:JL lower} is asymptotically larger  as $n\to \infty$ than the $O(\log n)$ dimension bound of the JL lemma, though only mildly so, and it remains a major challenge to establish any nontrivial metric dimension reduction result for $n$-point subsets of $\ell_p$ when $2<p<\infty$; a  discussion of such questions can be found in the survey~\cite{Naor-ribe}.  Even though~\eqref{eq:JL lower}  is merely a modest counterexample to the ``vanilla'' extension of the JL-lemma to $\ell_p$ when $2<p<\infty$, it answers an open question that was raised in multiple venues. For examples, this matter was broached in~\cite[Problem~3.2]{Mat-problems} and~\cite[Question~13]{Naor-ribe}, as well as~\cite[Problem~5.2]{Mat-problems}, where a concrete strategy towards an impossibility result was proposed, but that strategy has been subsequently ruled out in~\cite{CK05}. 

If $1<p<2$, then it remains open to determine whether every $n$-point subset of $\ell_p$ embeds with distortion $O(1)$ into $\ell_p^m$ for $m\lesssim \log n$, though we conjecture that the answer is negative also for $p$ in this range. The cases $p=1$ and $p=\infty$ have been settled negatively in~\cite{BC03} (see also~\cite{LeeNaor,Regev} for different proofs, as well as~\cite{ACNN} for a strengthening) and~\cite{AR92,Mat96,RV,LMN,Rab08,Nao17}, respectively.  The embedding of the JL lemma is actually a linear mapping, and if one adds such linearity as a requirement, then it is known~\cite{CS02,LMN} that the JL lemma fails to hold in $\ell_p$ for any $p\neq 2$. In fact, by~\cite{JN09} the statement of the JL lemma---with linearity of the embedding and its target being a subspace of logarithmic dimension---almost characterizes (in a precise sense that is formulated in~\cite{JN09}) Hilbert spaces among Banach spaces, yet there exist Banach spaces with this favorable dimension reduction property that are not isomorphic to a Hilbert space (nevertheless, their finite dimensional subspaces are very close to Euclidean spaces in the sense that the growth of their distance to Euclidean space in terms of their dimension is of inverse Ackermann type).

Given $K\in \N$, a metric space $(\MM,d_\MM)$ is said to be $K$-doubling if for every $x\in \MM$ and  $r>0$ there are $y_1,\ldots,y_K\in \MM$ such that $B_\MM(x,2r)\subset B_\MM(y_1,r)\cup\ldots\cup B_\MM(y_K,r)$, where $B_\MM(z,\rho)=\{y\in \MM:\ d_\MM(x,y)\le \rho\}$ is the closed ball of radius $\rho\ge 0$ centered at $z\in \MM$.\footnote{Below we will also use the standard notation $B_\MM^\circ(z,\rho)=\{y\in \MM:\ d_\MM(x,y)< \rho\}$  for the corresponding open ball.} Dimension reduction for doubling metric spaces is also a topic of great interest that exhibits major longstanding mysteries; see e.g.~\cite[Section~3]{Naor-ribe}. In particular, the  Lang--Plaut problem~\cite{LP01} asks if every $K$-doubling subset of $\ell_2$ embeds with distortion $O_K(1)$ into $\ell_2^m$ for $m=O_K(1)$. In terms of the dependence on the doubling constant, a full analog of the JL lemma would be that every $K$-doubling subset of $\ell_2$ embeds with distortion $O(1)$ into $\ell_2^m$ for $m\lesssim \log K$; we call this possibility the { sharp Lang--Plaut problem}. 

The works~\cite{BGN14,LN} ruled out a natural $\ell_p$ analog of the Lang--Plaut problem when $2<p<\infty$ by demonstrating that there are $O(1)$-doubling subsets of $\ell_p$ that do not embed (with any distortion) into $\ell_p^m$ for any $m\in \N$. In fact, those examples do not even embed (with any distortion) into $L_q$ for any $1<q<p$, and by~\cite{NY17} the example of~\cite{LN} also does not embed into $L_1$ (see~\cite[Remark~8]{NY18}).

By~\cite{Assouad}, for $0<\theta<1$ the $\theta$-snowflakes of the aforementioned counterexamples of~\cite{BGN14,LN} embed with distortion $O_{\theta}(1)$ into $\ell_2^m$ for $m=O_\theta(1)$, so~\cite{BGN14,LN} do not rule out  the $\ell_p$ analog of the Lang--Plaut problem for snowflakes of doubling metric spaces.  More generally,  by~\cite{Assouad} the $\theta$-snowflake of any $K$-doubling metric space embeds with distortion $O_{K,\theta}(1)$ into $\ell_2^m$ for $m=O_{K,\theta}(1)$; see~\cite{NN} for a better estimate on the dimension here as $\theta\to 1^-$ (see also~\cite{DS13} for a different proof). Thus, the ``vanilla'' Lang--Plaut problem for snowflakes of doubling metric spaces has a positive answer, even if they are not assumed  to be subsets of a Hilbert space. Asking in this context for the distortion to be $O(1)$ is, however,  an entirely different matter that has been  investigated in~\cite{HM05,ABN08,GK,BRS,Nei16,BG16}, in part due to the algorithmic implications of  small distortion low-dimensional embeddings of snowflaked doubling  metric spaces.  The Lang--Plaut problem for $\frac13$-snowflakes\footnote{The ensuing discussion extends mutatis mutandis to $\theta$-snowflakes for any $0<\theta<1$, but for illustration purposes within the present introductory discussion it is beneficial to carry less parameters by choosing one specific, but arbitrary, exponent.} asks whether for every $K\in \N$, the $\frac13$-snowflake of any $K$-doubling subset of $\ell_2$ embeds with distortion $O(1)$ into $\ell_2^m$ for $m=O_{K}(1)$, and correspondingly the sharp version thereof asks if one could achieve $m\lesssim \log K$ here. The former question was answered positively: it was proved in~\cite{GK}  with $m\lesssim (\log K)^2$ and~\cite{BRS} proved it with $m\lesssim (\log K)\log\log K$, thus even coming close to answering positively the sharp Lang--Plaut problem for snowlakes. Since for every integer $k\ge 4$ a standard volumetric covering argument shows that $S^{k-1}$ is $K$-doubling for some $K\in \N$ with $\log K\asymp k$,  Theorem~\ref{thm:transition}  shows that the analog of~\cite{BG,BRS} for embeddings into $\ell_p^m$ fails if $p$ is large enough; to state a concrete special case, for, say, $p=90$, we get that for arbitrarily large $K\in \N$ there exists a $K$-doubling subset of $\ell_2$ such that if its $\frac13$-snowflake embeds with distortion $O(1)$ into $\ell_{90}^m$, then necessarily $m\gtrsim ((\log K)/\log\log K)^{15}$. While an examination of the proofs in~\cite{BG,BRS} immediately  reveals that they use the fact that the target space is Euclidean, we thus see that some Euclidean reasoning is inherently needed there.

\subsection{On the proof of Theorem~\ref{thm:transition}} Fix $2<p<\infty$ and $m,k\in \N$. By~\cite{PR75,BDGJN77}, if $\ell_2^k$ embeds with distortion $O(1)$ into $\ell_p^m$, then necessarily $m\gtrsim_p k^{p/2}$, which is sharp by~\cite{FLM}. Thus, the exponent $p/2$ in the case $\theta=1$ of~\eqref{eq:snow-lower} is not surprising, and a natural approach to proving~\eqref{eq:snow-lower} would be to suitably discretize the proofs in~\cite{PR75,BDGJN77} (which are conceptually different from each other). However, we do not see how to proceed in that fashion.  To deduce the aforementioned bi-Lipschitz non-embeddability result from~\cite{PR75,BDGJN77}, one first differentiates the embedding (using Rademacher's theorem~\cite{Rad19} on almost everywhere differentiability of Lipschitz functions) to obtain a linear mapping with the same distortion guarantee, and then it is possible to perform  the  ingenious  reasoning  in~\cite{PR75,BDGJN77}, which relies highly on the linear theory. Furthermore, treating $\theta$-snowflakes as in Theorem~\ref{thm:transition} would require a different strategy, as H\"older functions need not have any point of differentiability, whence it is unclear if a reduction to the linear theory is possible. 

Due to the above, the ensuing proof of~\eqref{eq:snow-lower}  uses an inherently nonlinear approach that is entirely different from the reasoning in~\cite{PR75,BDGJN77} and applies painlessly to H\"older functions.  The conceptual origin of this (short) proof is the work~\cite{RV} that uses concentration of measure to restrict bi-Lipschitz embeddings of L\'evy families of metric measure spaces into $\ell_\infty^m$, and to deduce interesting linear consequences of the existence of such embeddings.  In fact, our approach is closest to the implementation of this idea in the proof of~\cite[Proposition~4.2]{LMN} (see also~\cite{Rab08,Nao14}). All in all, the proof of~\eqref{eq:snow-lower} is quite short and conceptual, and the key new idea is the mere realization that it could be used for treating dimension reduction in $\ell_p$ by trivially bounding the $\ell_p^m$ norm by $m^{1/p}$ times the $\ell_\infty^m$ norm.     

We derive the  upper bound in Theorem~\ref{eq:snow-lower} by combining (substantial) embedding methods in the literature. Specifically, small distortion embeddings of the $\theta$-snowflake of the real line into low-dimensional Euclidean space is a classical subject that has been  investigated in e.g.~\cite{Gla58,Kahane,Assouad,Talagrand,DH99,HM12}. One can follow the strategy of~\cite{Assouad}  together with its enhancement herein (see Proposition~\ref{prop:Xcubed} below) to (sharply) improve its target dimension, showing  that for every $p\ge 1$ the $\theta$-snowflake of $\R$ embeds with distortion $O(1)$ into $\ell_p^m$, where $m\lesssim 1/\theta$.  We pass from this to the embedding of the $\theta$-snowflake of $\ell_2^k$ into $\ell_p^m$ of asymptotically optimal  dimension, thus establishing the sharp threshold phenomenon of Theorem~\ref{thm:transition}, by combining it through a short case analysis with the linear embeddings of~\cite{FLM,Kas77,Vyb08,FPRU10}.

\section{Proof of Theorem~\ref{thm:transition}}\label{sec:lower}

The proof of the lower bound~\eqref{eq:snow-lower} on  the target dimension $m$ of Theorem~\ref{thm:transition} relies on the following statement, 
which is inspired by \cite[Proposition~4.2]{LMN}:
\begin{lemma}\label{prop:engine} Fix $k,m\in \N$, as well as $0\le \theta\le 1$ and $\tau>0$. For each $i\in \{1,\ldots,m\}$, suppose that $\varphi_i : S^{k-1} \to \R$ is $\theta$-H\"older with constant $1$ with respect to the $\ell_2^k$ metric.   Suppose furthermore that:
\begin{equation}\label{eq:antipodal}
  \forall x\in S^{k-1},\qquad   \max_{i\in \{1,\ldots,m\}}\abs{\varphi_i(x)-\varphi_i(-x)}\ge2\tau.
\end{equation}
Then,
\begin{equation}\label{eq:use concentration}
(k-1)\tau^{\frac{2}{\theta}}\le 2\log m+4.
\end{equation}
\end{lemma}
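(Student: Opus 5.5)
The plan is to use concentration of measure on the sphere, applied to each $\varphi_i$ separately, and then a union bound over $i\in\{1,\ldots,m\}$. Let $M_i$ be a median of $\varphi_i$ with respect to $\sigma_{k-1}$. Because $\varphi_i$ is $\theta$-H\"older with constant $1$, the inequality $|\varphi_i(x)-M_i|\ge t$ forces $x$ to lie at Euclidean distance at least $t^{1/\theta}$ from the set $\{\varphi_i\le M_i\}$ or from the set $\{\varphi_i\ge M_i\}$. Each of these sets has measure at least $1/2$. L\'evy's isoperimetric inequality, in its standard form $\sigma_{k-1}(A_\e^{\cc})\le \sqrt{\pi/8}\,e^{-(k-1)\e^2/2}$ whenever $\sigma_{k-1}(A)\ge 1/2$ (or any version with constants $\le 2e^{-(k-1)\e^2/2}$ per side), then gives
\[
\sigma_{k-1}\big(\{x:\ |\varphi_i(x)-M_i|\ge t\}\big)\le 2e^{-(k-1)t^{2/\theta}/2}.
\]
Note that H\"older continuity enters only through $\e=t^{1/\theta}$. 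This is exactly why the exponent $2/\theta$ appears in~\eqref{eq:use concentration}.

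Next, fix $t$ slightly below $\tau$, or take $t=\tau$ and use the strict/nonstrict inequalities carefully. Let $E$ be the union over $i$ of the sets $\{|\varphi_i-M_i|\ge \tau\}$. By the union bound,
\[
\sigma_{k-1}(E)\le 2me^{-(k-1)\tau^{2/\theta}/2}.
\]
Since $\sigma_{k-1}$ is invariant under $x\mapsto -x$, the set $E\cup(-E)$ has measure at most $4me^{-(k-1)\tau^{2/\theta}/2}$. Suppose this quantity is less than $1$. Then some $x$ lies outside $E\cup(-E)$, so both $|\varphi_i(x)-M_i|<\tau$ and $|\varphi_i(-x)-M_i|<\tau$ hold for every $i$. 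Hence $|\varphi_i(x)-\varphi_i(-x)|<2\tau$ for all $i$, which contradicts~\eqref{eq:antipodal}. Therefore
\[
4me^{-(k-1)\tau^{2/\theta}/2}\ge 1,
\]
that is, $(k-1)\tau^{2/\theta}\le 2\log(4m)=2\log m+2\log 4\le 2\log m+4$, which is~\eqref{eq:use concentration}. The boundary case, where equality in~\eqref{eq:antipodal} meets $t=\tau$ exactly, is handled by running the argument with $t<\tau$ and letting $t\to\tau$; the resulting inequality is non-strict, so nothing is lost in the limit.

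The main point needing care is the precise constant in the concentration inequality, since the target bound has the explicit slack $+4$. I would cite the form $\sigma_{k-1}(\{d(\cdot,A)\ge\e\})\le e^{-(k-1)\e^2/2}$ for $\sigma_{k-1}(A)\ge1/2$, which follows from the Gromov--Milman/L\'evy inequality via the Ricci curvature $k-2$ bound; some references give $k-2$ instead of $k-1$. If only a weaker constant such as $\sqrt{\pi/8}$ or exponent $(k-2)$ is available, I would absorb the discrepancy into the $+4$ using $2\log 4<4$. Failing that, I would pass to the Gaussian measure to get exact constants. The degenerate case $\theta=0$ is trivial: there $\tau^{2/\theta}$ should be read as $0$ when $\tau<1$, and for $\tau\ge 1$ hypothesis~\eqref{eq:antipodal} cannot hold for functions with $|\varphi_i(x)-\varphi_i(y)|\le 1$.
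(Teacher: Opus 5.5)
Your proposal is correct and is essentially the paper's proof: medians, the spherical concentration inequality at scale $\e=\tau^{1/\theta}$, a union bound over $i$, and antipodal symmetry. (The paper phrases the last step as $E\cup(-E)=S^{k-1}$, hence $\sigma_{k-1}(E)\ge 1/2$, which is equivalent to your counting.)

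One bookkeeping remark. The standard L\'evy bound on $S^{k-1}$ has exponent $(k-2)\e^2/2$, so falling back on it leaves an extra term $\tau^{2/\theta}$. Absorbing that term into the $+4$ uses $\tau^{2/\theta}\le 1$, which follows from $2\tau\le|\varphi_i(x)-\varphi_i(-x)|\le 2^\theta$; the inequality $2\log 4<4$ alone does not suffice. Also, your parenthetical allowance of a constant $2$ per side would only give $2\log 8>4$.
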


\begin{proof} For each $i\in \{1,\ldots,m\}$, let $\mu_i$ be a median of $\f_i$, i.e., the $\sigma_{k-1}$-measure of   $\{x\in S^{k-1}:\ \varphi_i(x)\le\mu_i\}$ and $\{x\in S^{k-1}:\ \varphi_i(x)\ge\mu_i\}$ is at least $1/2$. As explained in~\cite[Chapter~2]{MS86} (in the context of more general moduli of continuity),  the $\theta$-H\"older assumption on $\f_i$ combined with the isomperimetric inequality for $\sigma_{n-1}$ gives the following estimate: 
\begin{equation}\label{eq:coord-conc}
 \forall i\in \{1,\ldots,m\},\qquad    \sigma_{k-1}\Big(\big\{x\in S^{k-1}:\ |\varphi_i(x)-\mu_i|\ge \tau\big\}\Big)\le 
    \sqrt{2\pi}e^{-\frac{k-1}{2}\tau^{\frac{2}{\theta}}}.
\end{equation}
Hence, $\sigma_{k-1}(E)\le \sqrt{2\pi}m e^{-\frac{k-1}{2}\tau^{\frac{2}{\theta}}}$, where we denote:
$$
E\eqdef \Big\{x\in S^{k-1}:\max_{i\in \{1,\ldots,m\}}|\varphi_i(x)-\mu_i|\ge\tau\Big\}=\bigcup_{i=1}^m \big\{x\in S^{k-1}:\ |\varphi_i(x)-\mu_i|\ge \tau\big\}.
$$
At the same time, the hypothesis~\eqref{eq:antipodal} implies  $E\cup(-E)=S^{k-1}$. Consequently,  $\sigma_{k-1}(E)\ge1/2$. By contrasting this with the aforementioned upper bound on $\sigma_{k-1}(E)$, we arrive at the desired conclusion~\eqref{eq:use concentration}.
\end{proof}

Using Lemma~\ref{prop:engine}, we can now prove the first part~\eqref{eq:snow-lower} of Theorem~\ref{thm:transition}.

\begin{proof}[Proof of~\eqref{eq:snow-lower}] If $m\ge (4D)^{-p}k^{\theta p/2}$, then~\eqref{eq:snow-lower} already holds, so we may assume from now that: 
\begin{equation}\label{eq:m small}
m<\frac{k^{\frac{\theta p}{2}}}{(4D)^p}.
\end{equation}

Suppose that $\psi:\MM\to \ell_p^m$ satisfies $\|x-y\|_2^\theta \le \|\psi(x)-\psi(y)\|_p\le D\|x-y\|_2^\theta$ for every $x,y\in \MM$. As $\|x\|_\infty\le \|x\|_p\le m^{1/p}\|x\|_\infty$ for every $x\in \R^m$, by rescaling $\psi$ we get a function $\f:\MM\to \R^m$ that satisfies:
\begin{equation}\label{eq:pass to infty}
\forall x,y\in \MM,\qquad \frac{1}{Dm^{\frac{1}{p}}}\|x-y\|_2^\theta\le \|\f(x)-\f(y)\|_\infty\le \|x-y\|_2^\theta.
\end{equation}
By the nonlinear Hahn--Banach theorem (see~\cite{Mac34}, or e.g.~\cite[Chapter~1]{BL00}), there exists  $\Phi:\R^k\to \R^m$ whose restriction to $\MM$ coincides with $\f$, and it is $1$-Lipschitz as a mapping from $\ell_2^k$ to $\ell_\infty^m$. As $\MM$ is assumed to be $(1/\sqrt{k})$-dense in $S^{k-1}$, for every $x\in S^{k-1}$ there are $y,z\in \MM$ such that $\|x-y\|_2\le 1/\sqrt{k}$ and $\|x+z\|_2\le 1/\sqrt{k}$. Thus, $\|y-z\|_2\ge 2-2/\sqrt{k}\ge 1$, as $k\ge 4$. Consequently:
\begin{multline*}
\norm{\Phi(x) - \Phi(-x)}_\infty \ge \norm{\Phi(y) - \Phi(z)}_\infty - \norm{\Phi(x) - \Phi(y)}_\infty - \norm{\Phi(-x) - \Phi(z)}_\infty\\ \stackrel{\eqref{eq:pass to infty}}{\ge} \frac{1}{Dm^{\frac{1}{p}}} \norm{y-z}_2^\theta-\|x-y\|_2^\theta-\|x+z\|_2^\theta 
\ge \frac{1}{Dm^{\frac{1}{p}}} - \frac{2}{k^{\theta/2}} \stackrel{\eqref{eq:m small}}{\ge} \frac{1}{2Dm^{\frac{1}{p}}} .
\end{multline*}
Now, Lemma~\ref{prop:engine}  applied to the coordinates of $\Phi$ provides the following estimate, which simplifies to~\eqref{eq:snow-lower}:
\begin{equation*}
\frac{k}{2}\bigg(\frac{1}{4Dm^{\frac{1}{p}}}\bigg)^{\frac{2}{\theta}}<(k-1)\bigg(\frac{1}{4Dm^{\frac{1}{p}}}\bigg)^{\frac{2}{\theta}}\le 2\log m+4\stackrel{\eqref{eq:m small}}{<}\theta p \log k +4.\tag*{\qedhere}
\end{equation*}
\end{proof}

\smallskip

We will next pass to the proof of the second part of Theorem~\ref{thm:transition}. One of its inputs is the following result:

\begin{prop}\label{prop:Xcubed} For every $K\in \N$ there are $A_K,D_K\ge 1$ such that if $0<\theta\le 3/5$ and $\X$ is a normed space with $\dim \X\ge A_K/\theta$, then the $\theta$-snowflake of any $K$-doubling metric space embeds into $\X^3$ with distortion $D_K$. 
\end{prop}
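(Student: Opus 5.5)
Following the strategy of~\cite{Assouad}, I will build the embedding of the $\theta$-snowflake of a $K$-doubling space $(\MM,d_\MM)$ as a sum over dyadic scales of bounded, locally Lipschitz bump maps. For each $j\in\Z$ I take a maximal $2^{-j}$-separated net $N_j$ of $\MM$. The doubling property lets me colour $N_j$ with $L=L(K)$ colours so that points of the same colour are at distance $\ge C2^{-j}$, where $C$ is a large constant to be fixed. For each colour $\ell$ I assign a vector $v_{j,\ell}\in\X$, and define
$$
f_j(x)=\sum_{z\in N_j}\max\{0,\,1-2^{j}d_\MM(x,z)/2\}\,v_{j,\mathrm{col}(z)}.
$$
Then $f_j$ is $O(2^{j})$-Lipschitz, is bounded by $\max_\ell\|v_{j,\ell}\|$, and at most one bump of each colour is active at any point. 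Two choices are typical here: $\X=\ell_2^{m}$ with the vectors orthonormal, or a cyclic choice of blocks $j\bmod M$. The final map is $F(x)=\sum_j 2^{-j\theta}f_j(x)$, placed into $\X^3$.

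The upper bound is the standard computation. For $d_\MM(x,y)\approx 2^{-J}$, the scales $j\le J$ contribute at most $\sum_{j\le J}2^{-j\theta}2^{j}d_\MM(x,y)\lesssim d_\MM(x,y)^\theta/(1-2^{\theta-1})$. Since $\theta\le 3/5$, this is $O(d_\MM(x,y)^\theta)$ uniformly in $\theta$. The scales $j>J$ contribute at most $\sum_{j>J}2^{-j\theta}\lesssim 2^{-J\theta}/\theta$, and this $1/\theta$ loss is exactly what must be prevented. So the vectors must be arranged so that the high-frequency part does not add up coherently. I would split the scales into about $M\asymp A/\theta$ residue classes modulo $M$. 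Consecutive classes are sent to nearly orthogonal (or well-separated) directions, so that within one class the scales are spaced by a factor $2^{M\theta}\ge 2^{A}$ and the geometric series over that class is dominated by its first term. The summation over classes must then be controlled without losing a factor of $M$. For this I would use the Dvoretzky-type fact that a normed space of dimension $\ge A_K/\theta$ contains a subspace of dimension $\asymp\log(A_K/\theta)$ that is $2$-isomorphic to Euclidean, or a well-conditioned basic sequence. The components in different classes then add in an $\ell_2$ fashion, and for $\theta\le 3/5$ the combined high-frequency tail sums to $O(2^{-J\theta})$.

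The lower bound comes from the scale $j=J+c$, where $x$ and $y$ lie in bumps of different colours or in different bumps, so that $f_j(x)-f_j(y)$ has size $\gtrsim 1$ in a direction where the lower-scale and higher-scale contributions cannot cancel it. This is why the construction uses three copies of $\X$. I would split the scales into three families $j\equiv 0,1,2\pmod 3$ (or three groups of residue classes), placing each family in its own copy of $\X$. Within a fixed copy, the adjacent scales of the same family are then separated by a factor of $8$, and the scales below them have Lipschitz contributions of size $O(2^{-3\theta}\cdot)$ relative to the leading term. The main obstacle, and where most of the care goes, is this lower bound: showing that in a general normed space (not Hilbert), with only $\asymp A_K/\theta$ dimensions, the designated scale's term cannot be cancelled by the sum of the other terms. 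I would handle it by choosing the vectors $v_{j,\ell}$ from a $1$-separated set of unit vectors, which exists in number $e^{c\dim\X}$ by volumetric packing. The choice is made so that, for each of the $O(L)$ colour pairs and nearby scales, the difference vectors $v_{j,\ell}-v_{j,\ell'}$ have norm bounded below. Combined with the tail bound, the main term then dominates once $A_K$ (and hence the residue spacing) is large enough. The constants $A_K$ and $D_K$ depend only on $L(K)$, giving the proposition.
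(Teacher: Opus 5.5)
Your outline has the right architecture: nets, colourings, bump maps, and a lacunary sum split into three copies of $\X$. However, working with dyadic scales makes a $\theta$-independent distortion impossible, and the repairs you propose do not fix this.

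\textbf{Upper bound.} The $\asymp 1/\theta$ scales $J<j\le J+1/\theta$ all carry weight between $2^{-J\theta}/2$ and $2^{-J\theta}$, and their difference vectors $f_j(x)-f_j(y)$ have norm $\asymp 1$. Even if these vectors were mutually orthogonal, their sum would have norm $\asymp(\sum_{j>J}2^{-2j\theta})^{1/2}\asymp 2^{-J\theta}/\sqrt{\theta}$. So ``$\ell_2$ addition across residue classes'' would still lose a factor $\theta^{-1/2}$. It is not available anyway: a $2$-Euclidean subspace of dimension $\asymp\log(A_K/\theta)$, which is all Dvoretzky's theorem gives in general (e.g.\ in $\ell_\infty^n$), cannot contain $M\asymp A/\theta$ nearly orthogonal directions.

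\textbf{Lower bound.} It fails for the same reason. With dyadic scales, the split modulo $3$ damps the finer scales in the same copy of $\X$ only by factors $2^{-3k\theta}$, and $\sum_{k\ge 1}2^{-3k\theta}\asymp 1/\theta$. These terms can therefore dwarf the designated term of size $O(2^{-(J+c)\theta})$. Ruling out cancellation would require lower bounds on long linear combinations of the $v_{j,\ell}$. Pairwise $1$-separation says nothing about linear combinations, not even within one scale, where several bumps of different colours are active at $x$ and at $y$ simultaneously.

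\textbf{The missing idea.} You need to coarsen the scales and pay for it with a large but sparsely used family of vectors.
\begin{itemize}
\item The paper takes the ratio $Q=(256K^2)^{3/\theta}$, so $Q^\theta$ depends only on $K$. In the lacunary sum $\sum_i Q^{-(1-\theta)i}\phi_i$, split modulo $3$ into $\X^3$, the competing terms are then bounded by $1/(Q^{3\theta}-1)$ and $Q/(Q^{3(1-\theta)}-1)$. Both are small compared with $\beta=1/(32K^2)$ by the choice of $Q$ and $\theta\le 3/5<2/3$ (Lemma~\ref{fact:abstract}).
\item The price is that a single map $\phi_i$ must be bounded below by $\beta Q^i$ for \emph{all} distances in $[Q^i,Q^{i+1})$. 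Hence the colouring of a net at scale $\asymp Q^i$ must give distinct colours to net points within distance $\asymp Q^{i+1}$. This needs $C=K^{\ell+1}+1=e^{O_K(1/\theta)}$ colours, with $\ell\asymp\log_2 Q\asymp(1+\log K)/\theta$ (Lemma~\ref{lem:one scale} and Corollary~\ref{cor:all scales}).
\item These colours fit into dimension $O_K(1/\theta)$ only because at most $2K^2$ of them are active for any pair $x,y$. Choose $v_1,\ldots,v_C$ at random in $B_\X$ and use the Rogers--Shephard inequality: if $\dim\X\ge 3s\log(eC)$, then every $s$ of them satisfy $\|\sum_{j\in S}\xi_jv_j\|_\X\ge\frac12\max_{j\in S}|\xi_j|$ (Lemma~\ref{lem:one sided JL in any norm}). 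Taking $s=2K^2$ gives the requirement $\dim\X\gtrsim K^2(1+\log K)^2/\theta$.
\end{itemize}
This subset-wise well-conditioning of exponentially many vectors is exactly what neither a Dvoretzky subspace nor a separated packing supplies.
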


In Proposition~\ref{prop:Xcubed}, as well as throughout what follows,  we use the following notation and conventions. Given a normed space  $(\X,\|\cdot\|_\X)$, its unit ball is $B_\X=\{x\in \X:\ \|x\|_\X\le 1\}$. The norm on $\X^3$ will always be tacitly assumed  to be  such that $B_{\X^3}=B_\X^3$, i.e.,  $\|(x,y,z)\|_{\X^3}=\max\{\|x\|_\X,\|y\|_\X,\|z\|_\X\}$, for every $(x,y,z)\in \X^3$.  

A straightforward inspection of the ensuing proof of Proposition~\ref{prop:Xcubed}   reveals that what it actually requires is that $2/3-\theta=\Omega(1)$. We chose the arbitrary value $3/5$ for concreteness; replacing it by any fixed  quantity that is strictly smaller than $2/3$ influences only the implicit universal constants.  

Proposition~\ref{prop:Xcubed} is reminiscent of Assouad's embedding theorem~\cite{Assouad}, which implies the same result without the stated dependence of the dimension on $\theta$. The ensuing proof of Proposition~\ref{prop:Xcubed} follows the strategy of~\cite{Assouad} with a twist that relies on Lemma~\ref{lem:one sided JL in any norm} below to obtain dimension dependence on $\theta$ that is  better than what comes from the proof in~\cite{Assouad} (which yields an exponentially worse bound). That dependence is optimal in terms of its dependence on $\theta$ as $\theta\to 0^+$ for fixed $K$, which is what we need herein as we will use Proposition~\ref{prop:Xcubed} only when the embedded metric space is the real line, whence $K=O(1)$. Nevertheless, understanding in this context the dependence on $K$ as $K\to \infty$ is interesting; see Section~\ref{sec:sharp assouad} below. 


\begin{lemma}\label{lem:one sided JL in any norm} Fix $C,s\in \N$. If $(\X,\|\cdot\|_\X)$ is a normed space with $\dim \X\ge 3s\log (eC)$, then there are  $v_1,\ldots,v_C\in B_\X$ such that for every $\upxi_1,\ldots,\upxi_C\in \R$ and every $S\subset \{1,\ldots,C\}$ with $|S|=s$ we have:
\begin{equation}\label{eq:1/2 all subsets}
  \Big\|\sum_{j\in S} \upxi_j v_j\Big\|_\X\ge \frac12\max_{j\in S}|\upxi_j|, 
\end{equation}
\end{lemma}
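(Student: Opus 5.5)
The plan is to reduce \eqref{eq:1/2 all subsets} to a distance-to-subspace condition, and then choose the $v_j$ at random from $B_\X$ using volume. Write $n=\dim\X$ and let $\vol$ be Lebesgue measure on $\X\cong\R^n$.

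\emph{Reduction.} Suppose that for every $S\subset\{1,\ldots,C\}$ with $|S|=s$ and every $j\in S$ we have
\begin{equation*}
\mathrm{dist}_\X\big(v_j,W_{S,j}\big)\ge \tfrac12,\qquad\text{where } W_{S,j}\eqdef\mathrm{span}\{v_i:\ i\in S\setminus\{j\}\}.
\end{equation*}
Then \eqref{eq:1/2 all subsets} follows. Indeed, if $\upxi_j\neq 0$, then
\begin{equation*}
\Big\|\sum_{i\in S}\upxi_iv_i\Big\|_\X=|\upxi_j|\cdot\Big\|v_j+\sum_{i\in S\setminus\{j\}}\frac{\upxi_i}{\upxi_j}v_i\Big\|_\X\ge|\upxi_j|\,\mathrm{dist}_\X\big(v_j,W_{S,j}\big)\ge \tfrac12|\upxi_j|,
\end{equation*}
and we then maximize over $j\in S$. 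Only the values $j\in S$ matter, so it suffices to show the following: with positive probability, each $v_j$ is at distance at least $\frac12$ from the span of every $(s-1)$-element subfamily of the other vectors.

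\emph{Random choice and the one-subspace estimate.} Take $v_1,\ldots,v_C$ i.i.d.\ uniformly distributed in $B_\X$ with respect to $\vol$. Fix $j$ and a set $T\subset\{1,\ldots,C\}\setminus\{j\}$ with $|T|=s-1$, and condition on $\{v_i\}_{i\in T}$. Then $W=\mathrm{span}\{v_i\}_{i\in T}$ is a fixed subspace with $\dim W\le s-1$, and $v_j$ is independent of it. Suppose $v_j\in B_\X$ and $\|v_j-w\|_\X<\frac12$ for some $w\in W$. Then $\|w\|_\X<\frac32$, so
\begin{equation*}
v_j\in \big(W\cap \tfrac32 B_\X\big)+\tfrac12 B_\X .
\end{equation*}
Let $\mathcal N$ be an $\e$-net of $W\cap\frac32B_\X$ in the norm of $\X$. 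By the standard volumetric argument inside the space $W$, we can take $|\mathcal N|\le(1+3/\e)^{s-1}$. Every point of the tube then lies in a translate of $(\frac12+\e)B_\X$ centered in $\mathcal N$, so
\begin{equation*}
\Pr\big[\mathrm{dist}_\X(v_j,W)<\tfrac12\big]\le \Big(1+\frac3\e\Big)^{s-1}\Big(\frac12+\e\Big)^{n}.
\end{equation*}

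\emph{Union bound and bookkeeping.} The number of pairs $(j,T)$ is at most $C\binom{C}{s-1}\le (eC)^s$. Hence the construction succeeds as soon as
\begin{equation*}
(eC)^s\,(1+3/\e)^{s-1}\,(\tfrac12+\e)^n<1 .
\end{equation*}
With $n\ge 3s\log(eC)$ and, say, $\e=\frac1{20}$, we have $(\frac12+\e)^n\le (eC)^{-3s\log(20/11)}$. The left-hand side is then at most
\begin{equation*}
(eC)^{s(1-3\log(20/11))}\cdot 61^{s-1}.
\end{equation*}
Here $3\log(20/11)\approx1.79$, so the exponent of $eC$ is about $-0.79s$.

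The main obstacle is exactly this constant bookkeeping. The net factor $(1+3/\e)^{s-1}$ does not involve $C$, so it must be absorbed by the surplus decay $(eC)^{-\Omega(s)}$. That works for large $C$, but small $C$ (e.g.\ $C$ close to $s$) needs care. Two remedies I would try:
\begin{itemize}
\item[(i)] Exploit that $(eC)^s\ge e^s$, so that $n\ge 3s\log(eC)\ge 3s$, and choose $\e$ to balance the two factors.
\item[(ii)] Replace the crude net by the sharper inclusion of the tube in $W+\frac12 B_\X$. Its section volume can be bounded via the Rogers--Shephard inequality for the projection along $W$, giving a factor like $\binom{n}{s-1}2^{-(n-s+1)}$, which beats the net bound.
\end{itemize}
If the numerical constant $3$ still does not close, the fallback is to prove the statement with $\frac12$ replaced by a slightly smaller constant, or with $3$ replaced by a larger absolute constant. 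Neither change affects the later use of Lemma~\ref{lem:one sided JL in any norm} beyond the universal constants in Proposition~\ref{prop:Xcubed}.
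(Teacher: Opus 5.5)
Your reduction to $\mathrm{dist}_\X(v_j,W_{S,j})\ge\frac12$, the random choice of $v_1,\ldots,v_C$, and the one-subspace estimate $\Pr[\mathrm{dist}_\X(v_j,W)<\frac12]\le(1+3/\e)^{s-1}(\frac12+\e)^n$ are all correct. The proof is not finished, however: the final inequality you display is false in part of the range, and you leave it unresolved. Your ``fallback'' of weakening $\frac12$ or $3$ would prove a different statement.

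For example, at $s=C=2$ your bound $(eC)^{s(1-3\log(20/11))}\,61^{s-1}$ is about $4.15$. More generally, for $C=s$ it exceeds $1$ for every $2\le s\le 59$. The decisive loss is in the union bound, not in the net. The number of pairs $(j,T)$ is exactly $C\binom{C-1}{s-1}=s\binom{C}{s}\le sC^s/s!$, and replacing it by $(eC)^s$ discards a factor of roughly $s^s$. That factor is exactly what is needed when $C$ is close to $s$. You must also use that the lemma is vacuous unless $C\ge s$, so that $L=\log(eC)\ge 1+\log s$.

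With these two observations, $\e=\frac1{20}$ and $a=3\log\frac{20}{11}\approx1.79$, the failure probability is at most
\[
s\binom{C}{s}\,61^{s-1}\Big(\frac{11}{20}\Big)^{n}\le\frac{s\,61^{s-1}}{s!\,e^{s}}\,e^{-(a-1)sL}\le b_s\eqdef\frac{s\,61^{s-1}}{s!}\,e^{-as}s^{-(a-1)s}.
\]
Numerically, $b_1,\ldots,b_5\approx0.17,\,0.56,\,0.63,\,0.36,\,0.12$, and $b_{s+1}/b_s\le\frac{61e^{-a}}{s}(s+1)^{1-a}<\frac12$ for $s\ge5$. So your elementary net argument does prove the lemma with the stated constants once the count is done correctly.

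For comparison, the paper takes your remedy (ii). By Fubini over $\Y^\perp$ and the Rogers--Shephard projection--section inequality, it shows $\vol_n\{x\in B_\X:\ d_\X(x,\Y)\le\d\}\le\binom{n}{k}\d^{n-k}$ for every $k$-dimensional subspace $\Y$. It then asserts, as an exercise, that $s\binom{C}{s}\binom{n}{s-1}2^{-(n-s+1)}<1$ when $n\ge 3s\log(eC)$; that verification also needs the exact count and $C\ge s$. The Rogers--Shephard route gives a sharper tube estimate at the price of a nontrivial convex-geometric inequality. Your net bound is more elementary and still leaves enough room for the constant $3$.
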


\begin{proof}  The desired conclusion coincides with requiring that  $d_\X(v_i,\mathrm{span}(\{v_j\}_{j\in S\setminus\{i\}}))\ge 1/2$ for every subset $S$ of  $\{1,\ldots,C\}$  of size $s$ and every $i\in S$, where we write $d_\X(x,\sub)=\inf_{y\in \sub} \|x-y\|_\X$ for  $x\in \X$ and $\sub\subset \X$.

Set $n=\dim \X$. Fix an $n$-dimensional  Lebesgue measure $\vol_n$ on $\X$, normalized so that $\vol_n(B_\X)=1$. If $k\in \{0,\ldots,n\}$ and $\Y\subset \X$ is a $k$-dimensional linear subspace of $\X$, then the following estimate holds:
\begin{equation}\label{eq:neighborhood of subspace}
\forall \d>0,\qquad \vol_n\big[x\in B_\X:\ d_\X(x,\Y)\le \d\big]\le \tbinom{n}{k}\d^{n-k}. 
\end{equation}
Indeed, fixing a Euclidean structure on $\X$ that induces $\vol_n$, letting $\Proj_\Y^\perp:\X\to \Y^\perp$ denote the orthogonal projection onto the orthogonal complement $\Y^\perp$ of $\Y$, and denoting for each $0\le d\le n$ the $d$-dimensional Hausdorff measure that $\vol_n$ induces on $\X$ by $\vol_d$, by Fubini we have:
\begin{align}\label{eq:use rogers shephard}
\begin{split}
\vol_n\big[x\in B_\X:\ &d_\X(x,\Y)\le \d\big]=\int_{\d\Proj_{\Y^\perp}B_\X}\vol_{k} \big(B_\X\cap (z+\Y)\big)\ud z\\&\le 
\Big(\max_{z\in \d\Proj_{\Y^\perp}B_\X} \vol_{k} \big(B_\X\cap (z+\Y)\big)\Big)\d^{n-k}\vol_{n-k}\big(\Proj_{\Y^\perp}B_\X\big)\le \tbinom{n}{k}\d^{n-k}\vol_n(B_\X)=\tbinom{n}{k}\d^{n-k}, 
\end{split}
\end{align}
where the penultimate step of~\eqref{eq:use rogers shephard} is the Rogers--Shephard projection–section inequality~\cite[Theorem~1]{RS58}. 

We conclude  by considering as follows i.i.d.~random elements  $v_1,\ldots,v_C$ of $B_\X$ that are distributed according to the restriction of $\vol_n$ to $B_\X$: 
\begin{align}\label{eq:the prob estimate}
\begin{split}
&\vol_n^{\otimes C} \Big[(v_1,\ldots,v_C)\in B_\X^C:\ \min_{\substack{ S\subset \{1,\ldots, C\}\\ |S|=s\\i\in S}} d_\X\big(v_i,\mathrm{span}(\{v_j\}_{j\in S\setminus \{i\}})\big)\ge \frac12 \Big]\\&\ \ \ \le 1- \sum_{\substack{S\subset \{1,\ldots,C\}\\|S|=s}} \sum_{i\in S}\vol_n^{\otimes C} \Big[(v_1,\ldots,v_C)\in B_\X^C:\ d_\X\big(v_i,\mathrm{span}(\{v_j\}_{j\in S\setminus\{i\}})\big)<\frac12 \Big]\stackrel{\eqref{eq:use rogers shephard}}{\ge} 1- \frac{\tbinom{C}{s}s\tbinom{n}{s-1}}{2^{n-s+1}}>0, 
\end{split}
\end{align}
where the last step of~\eqref{eq:the prob estimate} is a straightforward exercise using the assumption $n\ge 3s\log (eC)$.  
\end{proof}

\begin{remark} {\em It is trivial to adjust the proof of Lemma~\ref{lem:one sided JL in any norm} to get its version with the factor $\frac12$ in~\eqref{eq:1/2 all subsets} replaced by $1-\e$ for any $0<\e<1$; we omit the details as this is not needed herein and the modification  is mechanical. }
\end{remark}

The following lemma in the next step toward Proposition~\ref{prop:Xcubed}; it mimics the reasoning in~\cite{Assouad} (notably, the coloring argument therein) while using  the vectors of Lemma~\ref{lem:one sided JL in any norm} in place of an orthonormal basis.

\begin{lemma}\label{lem:one scale} Fix $K,\ell\in \N$. For every normed space $(\X,\|\cdot\|_\X)$  such that $\dim \X\ge 8\ell K^2\log (eK)$, and for every $K$-doubling metric space $(\MM,d_\MM)$, there exists a mapping $\phi:\MM\to \X$ satisfying:
\begin{equation}\label{scale 1}
\forall x,y\in \MM,\qquad \frac{1}{8K^2}\1_{\left\{4\le d_\MM(x,y)< 2^{\ell}-4\right\}}\le \|\phi(x)-\phi(y)\|_\X\le \min\left\{d_\MM(x,y), 1\right\}. 
\end{equation}
\end{lemma}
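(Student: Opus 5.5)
The plan is to run Assouad's single-scale construction, with the vectors supplied by Lemma~\ref{lem:one sided JL in any norm} replacing an orthonormal basis. The key steps are a net, a coloring of it, bump functions, and then an application of that lemma.

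\emph{Net and coloring.} Fix a maximal $1$-separated subset $\NN\subset\MM$, so every $x\in\MM$ has some $z\in\NN$ with $d_\MM(x,z)<1$. By the $K$-doubling property, iterated about $\log_2$ of the ratio of radii times, a ball of radius $R\ge 1$ meets at most $K^{\log_2 R+O(1)}$ points of $\NN$. In particular:
\begin{itemize}
\item a ball of radius $2$ contains at most $K^{2}$ points of $\NN$ (up to adjusting the absolute constants);
\item a ball of radius $2^{\ell}$ contains at most $K^{\ell+O(1)}$ points of $\NN$.
\end{itemize}
Greedily color $\NN$ with a palette $\{1,\ldots,C\}$, where $C\le K^{\ell+O(1)}$, so that any two distinct net points at distance $<2^{\ell}$ receive distinct colors $\chi(z)\ne\chi(z')$. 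Set $s=2K^2$, an upper bound on the number of net points within distance $2$ of either of two given points. Then $3s\log(eC)\lesssim K^2\ell\log(eK)$. I expect this to fit under the hypothesis $\dim\X\ge 8\ell K^2\log(eK)$ after careful bookkeeping of the doubling counts. Lemma~\ref{lem:one sided JL in any norm} now provides $v_1,\ldots,v_C\in B_\X$ satisfying \eqref{eq:1/2 all subsets} for all $s$-subsets.

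\emph{The map.} For $z\in\NN$ let $g_z(x)=\max\{0,2-d_\MM(x,z)\}/(4K^2)$, and define
$$\phi(x)=\sum_{z\in\NN} g_z(x)\,v_{\chi(z)}.$$
At any point at most $K^2$ of the $g_z$ are nonzero. Each $g_z$ is $\frac1{4K^2}$-Lipschitz and bounded by $\frac1{2K^2}$, and $\|v_j\|_\X\le1$. Hence $\|\phi(x)-\phi(y)\|_\X\le\sum_z|g_z(x)-g_z(y)|$, where only the at most $2K^2$ net points active at $x$ or at $y$ contribute. This gives $\|\phi(x)-\phi(y)\|_\X\le \min\{d_\MM(x,y),1\}$, which is the upper bound in \eqref{scale 1}.

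\emph{Lower bound.} Suppose $4\le d_\MM(x,y)<2^\ell-4$. Choose $z\in\NN$ with $d_\MM(x,z)<1$, so that $g_z(x)\ge 1/(4K^2)$ and $g_z(y)=0$. Every other net point $z'$ active at $x$ or $y$ lies within distance $d_\MM(x,y)+4<2^\ell$ of $z$, so its color differs from $\chi(z)$. The active points active at both $x$ and $y$ are handled by the same reasoning; they are in fact absent, since $d_\MM(x,y)\ge4$.

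The step needing care is that distinct active points may share a color with each other, just not with $z$. I will handle this by grouping terms by color, writing
$$\phi(x)-\phi(y)=\sum_{j\in S}\upxi_j v_j \qquad\text{with } |S|\le s \text{ (padded to size exactly } s\text{)}.$$
Here the coefficient of $v_{\chi(z)}$ is exactly $\upxi_{\chi(z)}=g_z(x)\ge1/(4K^2)$. Then \eqref{eq:1/2 all subsets} gives $\|\phi(x)-\phi(y)\|_\X\ge\frac12\cdot\frac1{4K^2}=\frac1{8K^2}$.

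The main obstacle I anticipate is purely quantitative: matching the doubling counts, namely the palette size $C$ and the sparsity $s$, to the exact constant $8\ell K^2\log(eK)$. If the counts come out slightly too large, I would shrink the support radius of the bumps, using a $\frac12$-separated net with support radius $1$, to trade constants.
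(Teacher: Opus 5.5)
Your construction is the paper's own: a maximal net, a greedy coloring, the vectors of Lemma~\ref{lem:one sided JL in any norm} with $s=2K^2$, and bumps $\max\{2-d_\MM(x,a),0\}/(4K^2)$. Your upper bound and your lower-bound mechanism are right. Grouping by color is harmless but unnecessary: all active points are pairwise at distance $<d_\MM(x,y)+4<2^\ell$, so their colors are automatically distinct.

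The gap is the bookkeeping you deferred, and it is not cosmetic. The explicit $\frac{1}{8K^2}$ and $8\ell K^2\log(eK)$ are the whole content of the statement, and under your convention the key count is false. Getting $d_\MM(x,z)<1$ from maximality means net points are only required to be at distance $\ge 1$. Then a radius-$2$ ball can contain more than $K^2$ net points: $\Z$ is such a maximal net in the $2$-doubling line $\R$, and $[-2,2]$ contains $5>2^2$ of its points. What the doubling iteration actually yields in that case is $K^3$ net points in radius-$2$ balls and $K^{\ell+2}$ in radius-$2^\ell$ balls. That shifts the power of $K$, not an absolute constant: you get $s=2K^3$, a lower bound of only $\frac{1}{8K^3}$, and a dimension requirement beyond the hypothesis. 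Your fallback (a $\frac12$-separated net with radius-$1$ bumps) repairs $s$, but it still needs a palette of $K^{\ell+2}+1$ colors. Then $6K^2\log(e(K^{\ell+2}+1))\le 8\ell K^2\log(eK)$ fails for $\ell\in\{4,5\}$ and large $K$.

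The fix is the paper's choice: require distinct net points to be at distance $>1$. A closed ball of radius $\frac12$ then contains at most one net point, so iterating the doubling condition gives $|\NN\cap B_\MM(x,2^{j-1})|\le K^j$ for every $j\in\N$. Maximality still gives $a_x\in\NN$ with $d_\MM(x,a_x)\le 1$, so the bump at $a_x$ is at least $\frac{1}{4K^2}$ at $x$ and vanishes at $y$. Taking $j=2$ gives $s=2K^2$. Taking $j=\ell+1$ gives a palette of $C=K^{\ell+1}+1$ colors, and $6K^2\log(eC)\le 6K^2\big(1+\log 2+(\ell+1)\log K\big)\le 8\ell K^2\log(eK)$ once $\ell\ge 3$.

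Lastly, do not apply Lemma~\ref{lem:one sided JL in any norm} when $\ell\le 3$, where the hypothesis need not cover $6K^2\log(eC)$ (e.g.\ $\ell=1$ and $K$ large). In that range $2^\ell-4\le 4$, so the lower bound is vacuous and $\phi\equiv 0$ works.
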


\begin{proof} Fix $\NN\subset \MM$ that is maximal with respect to inclusion relative to the requirement that every distinct $a,b\in \NN$ satisfy $d_\MM(a,b)>1$. Thus, for each $x\in \MM$ we may fix a point $a_x\in \NN$ satisfying $d_\MM(a_x,x)\le 1$.   As $\{B_\MM(x,1/2)\}_{a\in \NN}$ are pairwise disjoint, by iterating the $K$-doubling assumption we know that: 
\begin{equation}\label{eq:small balls in net}
\forall s\in \N,\qquad \max_{x\in \MM}|\NN\cap B_\MM(x,2^{s-1})|\le K^{s}.
\end{equation}
As explained in~\cite{Assouad} (see also the exposition in e.g.~the monograph~\cite[Chapter~12]{Hei01}), if we set $C= K^{\ell+1}+1$, then the case $s=\ell+1$ of~\eqref{eq:small balls in net} implies that there exists a function (a coloring)  $\upchi:\NN\to \{1,\ldots, C\}$ such that:
\begin{equation}\label{eq:coloring coindition}
\forall a,b\in \NN,\qquad 0<d_\MM(a,b)\le 2^{\ell}\implies \upchi(a)\neq\upchi(b). 
\end{equation}

We may assume from now that $\ell\ge 4$ as if $\ell\in \{1,2,3\}$, then~\eqref{scale 1} holds even when  $\phi\equiv 0$.  Observe that $\dim\X\ge 8\ell K^2\log(eK)\ge 6K^2\log(eC)$, where the second inequality is elementary calculus, using $\ell\ge 4$. Thus,  the assumption of Lemma~\ref{lem:one sided JL in any norm} holds with $C$ as above and  $s=2K^2$, whence we can fix $v_1,\ldots,v_C\in B_\X$  that have the following property:
\begin{equation}\label{eq:super 1/2 bessel}
\forall \upxi_1,\ldots,\upxi_C\in \R,\ \forall S\subset \{1,\ldots,C\},\qquad |S|=2K^2\implies \Big\|\sum_{j\in S} \upxi_j v_j\Big\|_\X\ge \frac12 \max_{j\in S}|\upxi_j|. 
\end{equation}
Using these vectors, we  now define  $\phi:\MM \to \R^n$ as follows:
\begin{equation}\label{eq:def our f_i}
\forall x\in \MM,\qquad \phi(x)\eqdef  \sum_{a\in \NN\cap B_\MM^\circ(x,2)} \frac{2-d_\MM(x,a)}{4K^2} v_{\upchi(a)}.
\end{equation}

If  $x,y\in \MM$ satisfy $d_\MM(x,y)\ge 4$, then $B_\MM^\circ(x,2)\cap B_\MM^\circ(y,2)=\emptyset$, so the index set $\NN\cap B_\MM^\circ(x,2)$ in the sum in~\eqref{eq:def our f_i}  is disjoint from the index set $\NN\cap B_\MM^\circ(y,2)$ in the corresponding sum in~\eqref{eq:def our f_i} with $x$ replaced by $y$. If also $d_\MM(x,y)\le 2^{\ell}-4$, then $0<d_\MM(a,b)< 2^{\ell}$ for  $(a,b) \in  B_\MM^\circ(x,2)\times B_\MM^\circ(y,2)$, whence thanks to~\eqref{eq:coloring coindition} we know that $\upchi(a)\neq \upchi(b)$ for every $a\in \NN\cap B_\MM^\circ(x,2)$ and $b\in \NN\cap B_\MM^\circ(y,2)$. Furthermore, $0<d_\MM(a,b)<4\le 2^\ell$ if   $\{a,b\}\subset \NN\cap B_\MM^\circ(x,2)$ or $\{a,b\}\subset \NN\cap B_\MM^\circ(x,2)$ and $a\neq b$, so  $\upchi(a)\neq \upchi(b)$ by~\eqref{eq:coloring coindition}. This shows that the ``active colors'' $\{\upchi(a):\ a\in \NN\cap (B_\MM^\circ(x,2)\cup B_\MM^\circ(y,2))\}$ are distinct, and by~\eqref{eq:small balls in net} with $s=2$ there are at most $2K^2$  such colors. By the definition~\eqref{eq:def our f_i} of $\phi$ and~\eqref{eq:super 1/2 bessel} we therefore have the following lower bound: 
$$
\|\phi(x)-\phi(y)\|_\X \ge  \max_{a\in \NN\cap \left(B_\MM^\circ(x,2)\cup B_\MM^\circ(y,2)\right)} \frac{|2-d_\MM(a_,x)|}{8K^2}
\ge  \frac{2-d_\MM(a_x,x)}{8K^2} \ge \frac{1}{8K^2}.
$$
This proves the first inequality in~\eqref{scale 1}. The second inequality in~\eqref{scale 1} is much simpler to justify, because it can be deduced as follows using only $\|v_1\|_\X,\ldots,\|v_C\|_\X\le 1$, the triangle inequality for $\|\cdot\|_\X$, and the fact that the function  $(x\in \MM)\mapsto  \max\{2-d_\MM(x,a),0\}$ is $1$-Lipschitz and bounded by $2$: 
\begin{align*}
\|\phi(x)-\phi(y)\|_\X\stackrel{\eqref{eq:def our f_i}}{=}\bigg\|&\sum_{a\in \NN\cap \left(B_\MM^\circ(x,2)\cup B_\MM^\circ(y,2)\right)}\frac{\max\big\{2-d_\MM(x,a),0\big\}-\max\big\{2-d_\MM(y,a),0\big\}}{4K^2} v_{\upchi(a)}\bigg\|_\X\\&\ \ \ \ \ \ \ \ \ \le 
\frac{\max\big\{d_\MM(x,y),2\big\}}{4K^2}\left(|\NN\cap B_\MM(x,2)|+|\NN\cap B_\MM(y,2)|\right)\stackrel{\eqref{eq:small balls in net}}{\le} \max\big\{d_\MM(x,y),1\big\}.\tag*{\qedhere}
\end{align*}
\end{proof}

The following consequence of Lemma~\ref{lem:one scale}  applies it to a sequence of multiples of the original metric:

\begin{corollary}\label{cor:all scales} Fix  $K\in \N$ and $Q>1$. Let $(\X,\|\cdot\|_\X)$ be a normed space with $\dim\X\ge 32K^2(1+\log K)\log(eQ)$, and let $(\MM,d_\MM)$ be a $K$-doubling metric space. Then,  there is a sequence of mappings   $\{\phi_i:\MM\to \X\}_{i\in \Z}$ such that the following estimates hold for every distinct $x,y\in \MM$ and every $i\in \Z$: 
\begin{equation}\label{eq:log version of pairwise in MM}
   \left\{\begin{array}{ll} \|\phi_i(x)-\phi_i(y)\|_\X\le \min\big\{d_\MM(x,y), Q^i\big\},\\
\|\phi_{\lfloor \log_Q d_\MM(x,y)\rfloor}(x)-\phi_{\lfloor \log_Q d_\MM(x,y)\rfloor}(y)\|_\X\ge \frac{1}{32K^2} Q^{\lfloor \log_Q d_\MM(x,y)\rfloor}. \end{array}\right.
\end{equation}
\end{corollary}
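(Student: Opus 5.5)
We apply Lemma~\ref{lem:one scale} separately at each scale $Q^i$, to a rescaled copy of the metric, and then rescale the output back. Set $\ell=\lceil \log_2(8Q)\rceil$, so that $2^\ell\ge 8Q$. First we check that the dimension hypothesis of Lemma~\ref{lem:one scale} holds, i.e., that $8\ell K^2\log(eK)\le 32K^2(1+\log K)\log(eQ)$. Since $\log(eK)=1+\log K$, this reduces to $\ell\le 4\log(eQ)$. Because $Q>1$, we have
$$\ell\le \log_2(8Q)+1=4+\log_2 Q\le 4+\tfrac{3}{2}\log Q\le 4\log(eQ).$$
This elementary verification is the only place where the specific constant $32$ enters.

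Fix $i\in\Z$ and put $r_i=Q^i/4$. Rescaling a metric preserves the $K$-doubling property, so $(\MM,d_\MM/r_i)$ is $K$-doubling. Lemma~\ref{lem:one scale} therefore gives $\psi_i:\MM\to\X$ with
$$\frac{1}{8K^2}\1_{\{4\le d_\MM(x,y)/r_i<2^\ell-4\}}\le\|\psi_i(x)-\psi_i(y)\|_\X\le\min\{d_\MM(x,y)/r_i,1\}.$$
Define $\phi_i=r_i\psi_i$. Then
$$\|\phi_i(x)-\phi_i(y)\|_\X\le \min\{d_\MM(x,y),r_i\}\le\min\{d_\MM(x,y),Q^i\},$$
which is the upper bound in~\eqref{eq:log version of pairwise in MM}.

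For the lower bound, fix distinct $x,y\in\MM$ and let $i=\lfloor\log_Q d_\MM(x,y)\rfloor$, so that $Q^i\le d_\MM(x,y)<Q^{i+1}$. Then $d_\MM(x,y)/r_i\ge 4$. We also need the strict upper bound $d_\MM(x,y)/r_i<2^\ell-4$. We have $d_\MM(x,y)/r_i<4Q$, and $4Q\le 2^\ell-4$ holds because $2^\ell\ge 8Q\ge 4Q+4$ once $Q\ge1$. Hence the indicator in the lower bound equals $1$, and
$$\|\phi_i(x)-\phi_i(y)\|_\X\ge \frac{r_i}{8K^2}=\frac{Q^i}{32K^2}.$$
Everything here is bookkeeping. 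The main point to watch is choosing the rescaling factor $r_i=Q^i/4$ and the parameter $\ell$ together, so that the window $[4,2^\ell-4)$ covers a full ratio-$Q$ band of distances while $\ell$ stays $O(\log(eQ))$ and the dimension bound is still met.
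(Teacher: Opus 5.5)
Your proposal is correct and is essentially the paper's proof: the paper also applies Lemma~\ref{lem:one scale} to $(\MM,4Q^{-i}d_\MM)$ and multiplies the result by $Q^i/4$, so that the detection window becomes $[Q^i,(2^{\ell-2}-1)Q^i)$. The only difference is the choice of $\ell$: the paper takes $\ell=\lceil\log_2(Q+1)\rceil+2$, while you take $\lceil\log_2(8Q)\rceil$, and both satisfy $2^{\ell-2}-1\ge Q$ and $\ell\le 4\log(eQ)$.
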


\begin{proof} Denote $\ell=\ell(Q)=\lceil \log_2(Q+1)\rceil+2\in \N$.   Then, $\dim\X\ge 32K^2(1+\log K)\log(eQ)\ge 8\ell K^2\log(eK)$, where the second inequality is an elementary calculus exercise using $Q\ge 1$. Thus, the assumption of Lemma~\ref{lem:one scale} holds.  For every $i\in \Z$, by applying  Lemma~\ref{lem:one scale}  to the metric space $(\MM,4Q^{-i}d_\MM)$, which is still $K$-doubling, and then multiplying the resulting mapping by $Q^i/4$, we obtain $\phi_i:\MM\to \X$ that satisfies:  
\begin{equation*}\label{eq:ell to Q}
\forall x,y\in \MM,\qquad \frac{Q^i}{32K^2}\1_{\left\{Q^i\le d_\MM(x,y)< (2^{\ell-2}-1)Q^i\right\}}\le \|\phi_i(x)-\phi_i(y)\|_\X\le \min \Big\{d_\MM(x,y), \frac14 Q^i\Big\}. 
\end{equation*}
This implies~\eqref{eq:log version of pairwise in MM} because $2^{\ell-2}-1\ge Q$ by the above choice of $\ell$. 
\end{proof}

 The following lemma is a standard bookkeeping fact for lacunary  superposition of mappings, variants of which are used in~\cite{Assouad} and many other places in the literature. As we could not locate a reference where it is stated  with the dependence on parameters that we use herein, we will provide its simple proof.

\begin{lemma}\label{fact:abstract} Let  $\fS$ be   a set and suppose that  $L:\fS^2\to [0,\infty)$ satisfies the following requirements:
$$
\forall s,t\in \fS,\qquad L(s,t)=L(t,s)\qquad\mathrm{and}\qquad   L(s,t)=0\iff s=t.
$$
Fix  $0<\beta,\theta\le 1$   and $Q>1$ for which:\footnote{The proof of Lemma~\ref{fact:abstract}  requires $2/3-\theta=\Omega(1)$; the specific upper bound on $\theta$ in~\eqref{Q large theta small} is fixed here for concreteness only.} 
\begin{equation}\label{Q large theta small}
Q\ge \left(\frac{8}{\beta}\right)^{\frac{3}{\theta}}\qquad \mathrm{and}\qquad  \theta\le \frac35. 
\end{equation}
Let $(\X,\|\cdot\|_\X)$ be a Banach space such that for every  $i\in \Z$ there exists a mapping $\phi_i:\fS\to \X$ such that the following estimates hold for every distinct $s,t\in \fS$:
\begin{equation}\label{eq:log version of pairwise}
 \|\phi_i(s)-\phi_i(t)\|_\X\le \min\big\{L(s,t), Q^i\big\}\qquad\mathrm{and}\qquad 
\|\phi_{\lfloor \log_Q L(s,t)\rfloor}(s)-\phi_{\lfloor \log_Q L(s,t)\rfloor}(t)\|_\X\ge \beta Q^{\lfloor \log_Q L(s,t)\rfloor}. 
\end{equation}
Then, there exists a mapping $F:\fS\to \X^3$ that satisfies: 
\begin{equation}\label{eq:F bookkeping guarantees}
\forall s,t\in \fS,\qquad \frac{\beta}{Q^\theta} L(s,t)^{\theta}\lesssim  \|F(s)-F(t)\|_{\X^3}\lesssim Q^\theta L(s,t)^{\theta}. 
\end{equation}
\end{lemma}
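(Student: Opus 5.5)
We split the scales $i\in\Z$ into the three residue classes modulo $3$ and, for each $r\in\{0,1,2\}$, define the $r$-th coordinate of $F$ by a lacunary weighted sum
$$
F_r(s)\eqdef \sum_{i\equiv r \bmod 3} Q^{-i(1-\theta)}\big(\phi_i(s)-\phi_i(s_0)\big),
$$
where $s_0\in\fS$ is a fixed base point; this normalization makes the series converge. We then set $F=(F_0,F_1,F_2)\in\X^3$, normed by the maximum of the three coordinates. Summing only over every third scale will let the dominant term beat both the small-scale and the large-scale tails, and this is where the condition $2/3-\theta=\Omega(1)$ enters.

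The upper bound and convergence come from the first estimate in~\eqref{eq:log version of pairwise}. Write $L=L(s,t)$ and $j=\lfloor\log_Q L\rfloor$, so $Q^j\le L<Q^{j+1}$. We split each coordinate sum at $j$.
\begin{itemize}
\item For $i\le j$ we bound the $i$-th term by $Q^{-i(1-\theta)}Q^i=Q^{i\theta}$. This is a geometric series dominated by its top term, giving $\lesssim Q^{j\theta}\le L^\theta$, uniformly since $Q^{3\theta}\ge 8^3$.
\item For $i>j$ we bound the $i$-th term by $Q^{-i(1-\theta)}L$. This geometric series has ratio at most $Q^{-3(1-\theta)}\le 1/2$, so it is $\lesssim Q^{-(j+1)(1-\theta)}L\le Q^{\theta}L^\theta$.
\end{itemize}
The same estimates with $t=s_0$ show the series defining $F_r$ converges absolutely, since $\X$ is complete. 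This gives $\|F(s)-F(t)\|_{\X^3}\lesssim Q^\theta L^\theta$.

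For the lower bound we pick $r\equiv j\pmod 3$ and look at $F_r(s)-F_r(t)$. By the second estimate in~\eqref{eq:log version of pairwise}, the $j$-th term has norm at least $Q^{-j(1-\theta)}\beta Q^j=\beta Q^{j\theta}\ge \beta Q^{-\theta}L^\theta$. We must show the other terms in the same residue class total at most half of this.
\begin{itemize}
\item The terms with $i\le j-3$ sum to at most $\sum_{i\le j-3,\, i\equiv j} Q^{i\theta}\lesssim Q^{(j-3)\theta}$. This is at most $\frac14\beta Q^{j\theta}$ because $Q^{3\theta}\ge (8/\beta)^3$ is much larger than $8/\beta$.
\item The terms with $i\ge j+3$ sum to at most $\lesssim Q^{-(j+3)(1-\theta)}L\le Q^{-(j+3)(1-\theta)}Q^{j+1}=Q^{j\theta}\,Q^{1-3(1-\theta)}=Q^{j\theta}Q^{3\theta-2}$.
\end{itemize}
Since $\theta\le 3/5$, we have $3\theta-2\le-1/5$, so the second tail is at most $Q^{j\theta}Q^{-1/5}$. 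We need this to be $\le\frac14\beta Q^{j\theta}$, that is, $Q^{1/5}\ge 4/\beta$. Using $\theta\le 3/5$ once more gives $Q\ge(8/\beta)^{3/\theta}\ge(8/\beta)^5$, so $Q^{1/5}\ge 8/\beta$, as required. The triangle inequality then gives $\|F_r(s)-F_r(t)\|_\X\ge\frac12\beta Q^{-\theta}L^\theta$.

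The main obstacle is this large-scale tail. The terms with $i>j$ cannot use the bound $Q^i$ and must use the bound $L<Q^{j+1}$, which loses one full factor of $Q$. Taking every third scale is exactly what compensates for that loss: it gains $Q^{-3(1-\theta)}$ against the lost $Q^{1}$, which is favourable precisely when $\theta<2/3$. The threshold of~\eqref{Q large theta small} on $Q$ then absorbs the $\beta$-dependence. The remaining steps are routine geometric-series bookkeeping.
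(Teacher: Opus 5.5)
Your proposal is correct and is essentially the paper's proof. It uses the same three coordinates $\sum_{m\in\Z}Q^{-(1-\theta)(3m+r)}\big(\phi_{3m+r}(s)-\phi_{3m+r}(s_0)\big)$, the same split at $\lambda=\lfloor\log_Q L(s,t)\rfloor$, and the same comparison of the main term $\beta Q^{\theta\lambda}$ with the tails $Q^{\theta\lambda}/(Q^{3\theta}-1)$ and $Q^{\theta\lambda}\cdot Q/(Q^{3(1-\theta)}-1)$, with $\theta\le 3/5$ making the second tail small. The only blemish is that you silently drop the factor $1/(1-Q^{-3(1-\theta)})\le 2$ from the large-scale tail; this is harmless, because you verify $Q^{1/5}\ge 8/\beta$ rather than just the $4/\beta$ you said was needed.
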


\begin{proof} Fix $s_0\in \fS$ and define $f_1,f_2,f_3:\fS\to \X$ as follows: 
\begin{equation}\label{eq:def one of 3 coordinates}
\forall r\in \{1,2,3\},\ \forall s\in \fS,\qquad f_r(s)\eqdef \sum_{m\in \Z} \frac{1}{Q^{(1-\theta)(3m+r)}}\big(\phi_{3m+r}(s)-\phi_{3m+r}(s_0)\big).
\end{equation}
Because~\eqref{Q large theta small} implies that $Q> 3$, the first inequality in~\eqref{eq:log version of pairwise} ensures that the  series  in~\eqref{eq:def one of 3 coordinates} converges absolutely   (at a geometric rate). We can therefore define $F:\fS\to \X^3$ by:
\begin{equation}\label{eq:defF}
\forall s\in \fS,\qquad F(s)\eqdef \big(f_1(s),f_2(s),f_3(s)\big).
\end{equation}

For every distinct $s,t\in  \fS$ let $\lambda_{st}\eqdef \lfloor \log_Q L(s,t)\rfloor$, i.e., $\lambda_{st}$ is the unique integer that satisfies:
\begin{equation}\label{eq:L beteen powers of Q}
Q^{\lambda_{st}}\le L(s,t)<Q^{\lambda_{st}+1}.
\end{equation}
With this notation, the second inequality in~\eqref{eq:log version of pairwise} becomes $\|\phi_i(s)-\phi_i(t)\|_2\ge \beta Q^{\lambda_{st}}$. To demonstrate the first inequality in~\eqref{eq:F bookkeping guarantees}, divide with remainder modulo $3$ to get $\mu_{st}\in \Z$ and $\rho_{st}\in \{1,2,3\}$ satisfying: 
\begin{equation}\label{eq:def mod 3}
\lambda_{st}=3\mu_{st}+\rho_{st}.
\end{equation}
The desired bound is seen by considering as follows the contribution of the $\rho_{st}$ coordinate of $F$: 
\begin{align}\label{eq:lower mod 3}
\begin{split}
\|F(s)-F(t)\|_2&\ge \|f_{\rho_{st}}(s)-f_{\rho_{st}}(t)\|_2\\&\!\!\!\!\!\!\!\!\!\!\!\!\!\!\!\!\!\!\!\stackrel{\eqref{eq:def one of 3 coordinates} \wedge \eqref{eq:defF}\wedge \eqref{eq:def mod 3}}{\ge} \frac{\|\phi_{\lambda_{st}}(s)-\phi_{\lambda_{st}}(t)\|_2}{Q^{(1-\theta)\lambda_{st}}} -\sum_{m\in \Z\setminus \{\mu_{st}\}}
\frac{\|\phi_{\lambda_{st}+3(m-\mu_{st})}(s)-\phi_{\lambda_{st}+3(m-\mu_{st})}(t)\|_2}{Q^{(1-\theta)\left(\lambda_{st}+3(m-\mu_{st})\right)}}\\
&\!\!\!\!\!\!\!\!\!\!\!\stackrel{\eqref{eq:log version of pairwise}\wedge\eqref{eq:L beteen powers of Q}}{\ge} \beta Q^{\theta \lambda_{st}}-\sum_{m=-\infty}^{\mu_{st}-1}Q^{\theta\left(\lambda_{st}+3(m-\mu_{st})\right)}
-\sum^{\infty}_{m=\mu_{st}+1}\frac{Q^{\lambda_{st}+1}}{Q^{(1-\theta)\left(\lambda_{st}+3(m-\mu_{st})\right)}}\\&= \bigg(\beta-\frac{1}{Q^{3\theta}-1}-\frac{Q}{Q^{3(1-\theta)}-1}\bigg)Q^{\theta\lambda_{st}}\stackrel{\eqref{Q large theta small} \wedge \eqref{eq:L beteen powers of Q}}{\gtrsim} \frac{\beta}{Q^\theta} L(s,t)^\theta,
\end{split}
\end{align}
 where the last step of~\eqref{eq:lower mod 3} is a straightforward  calculus exercise. The proof of the rest of~\eqref{eq:F bookkeping guarantees} is simpler: 
\begin{align*}
\|F(s)-F(t)\|_2&\stackrel{\eqref{eq:def one of 3 coordinates} \wedge \eqref{eq:defF}}{\le} \sum_{r=1}^3 \sum_{m\in \Z} \frac{\|\phi_{3m+r}(s)-\phi_{3m+r}(t)\|_2}{Q^{(1-\theta)(3m+r)}}\\
&\stackrel{\eqref{eq:log version of pairwise}\wedge\eqref{eq:L beteen powers of Q}}{\le}  \sum_{i=-\infty}^{\lambda_{st}} Q^{\theta i}+\sum_{i=\lambda_{st}+1}^\infty \frac{Q^{\lambda_{st}+1}}{Q^{(1-\theta)i}} 
=\bigg(\frac{Q^\theta}{Q^\theta-1} +\frac{Q}{Q^{1-\theta}-1}\bigg) Q^{\theta \lambda_{st}}\stackrel{\eqref{Q large theta small}\wedge \eqref{eq:L beteen powers of Q}}{\lesssim} Q^\theta  L(s,t)^{\theta}.\tag*{\qedhere}
\end{align*}
\end{proof}

We can now complete the proof of Proposition~\ref{prop:Xcubed}:

\begin{proof}[Proof of Proposition~\ref{prop:Xcubed}] Denote $Q\eqdef (256K^2)^{3/\theta}$ and $\beta\eqdef 1/(32K^2)$. So, \eqref{Q large theta small} holds by design and we have $A_K/\theta\ge 32K^2(1+\log K)\log(eQ)$ for a suitable choice of $A_K\asymp K^2(\log K)^2\asymp_K 1$. Corollary~\ref{cor:all scales} shows that the assumptions of Lemma~\ref{fact:abstract}  hold for $L=d_\MM$; by its conclusion~\eqref{eq:F bookkeping guarantees}   the $\theta$-snowflake of $(\MM,d_\MM)$ embeds into $\X^3$ with distortion $Q^{2\theta}/\beta\asymp (256K^2)^{7}\asymp_K 1$. 
\end{proof}

Our proof of the second part of Theorem~\ref{thm:transition} will also use the following result:

\begin{prop}\label{prop:R2}
For every $\frac35\le \theta \le 1$ the $\theta$-snowflake of $\R$ embeds into $\R^2$ with distortion $O(1)$. 
\end{prop}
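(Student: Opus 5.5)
We will build an explicit Assouad--Kahane type embedding of $(\R,|\cdot-\cdot|^\theta)$ into the plane, using rotating unit vectors at all dyadic-type scales in place of a Euclidean basis. Fix a large constant $Q>1$ (a universal constant; since $\theta\ge 3/5$ is bounded below, $Q$ need not depend on $\theta$). Define $F:\R\to\R^2\cong\mathbb{C}$ by the lacunary series
\begin{equation*}
F(t)\eqdef \sum_{j\in\Z} Q^{-\theta j}\big(e^{\i Q^j t}-1\big),
\end{equation*}
a Weierstrass-type function. This is essentially Kahane's construction. The series converges absolutely: for $j\ge 0$ the terms are bounded by $2Q^{-\theta j}$, and for $j<0$ by $Q^{(1-\theta)j}|t|$.

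\emph{Upper bound.} Given $s\ne t$, let $J\in\Z$ satisfy $Q^{-J-1}<|s-t|\le Q^{-J}$. Bounding $|e^{\i Q^js}-e^{\i Q^jt}|$ by $\min\{Q^j|s-t|,2\}$ and splitting the sum at $j=J$ gives two geometric series. Both are $\lesssim |s-t|^\theta$ with constants depending on $Q$ and on $\theta$ only through $(1-Q^{-\theta})^{-1}$ and $(1-Q^{\theta-1})^{-1}$. The second factor is the dangerous one as $\theta\to1$. To avoid blowup at $\theta=1$, I would instead use a smooth bump phase at each scale, e.g.\ replace $e^{\i Q^j t}$ by a map of $\R$ into the unit circle of Lipschitz constant $Q^j$ that is locally identical across scales. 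Alternatively, for $\theta$ close to $1$ one can simply note that the identity embedding $\R\hookrightarrow\R^2$ handles $\theta=1$. Then for $\theta\in[3/5,1)$ one uses the bound $\sum_{j\le J}Q^{(1-\theta)j}$, accepting a $(1-\theta)^{-1}$ loss only if it cannot be avoided. If that loss is real, I would switch to the curve construction described below.

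\emph{Lower bound.} This is the main obstacle. With only two real coordinates, we cannot separate scales into independent blocks as in Lemma~\ref{fact:abstract}. Instead, the dominant term $j=J$ must beat the tail. For $j<J$ the terms contribute at most $\sum_{j<J}Q^{(1-\theta)j}|s-t|\lesssim Q^{-(1-\theta)}|s-t|^\theta/(1-Q^{\theta-1})$. For $j>J$ they contribute at most $\sum_{j>J}2Q^{-\theta j}\lesssim Q^{-\theta}|s-t|^\theta$. The dominant term itself is $Q^{-\theta J}|e^{\i Q^J(s-t)}-1|$, which can vanish when $Q^J(s-t)\in2\pi\Z$. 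To fix this I would choose $Q$ and the scale window so that $Q^J|s-t|\in(Q^{-1},1]$ stays within $(0,\pi)$. Then $|e^{\i x}-1|\gtrsim x\ge Q^{-1}$, which yields a lower bound of order $Q^{-1+\theta}|s-t|^\theta\cdot Q^{-\theta}$... Comparing this with the tails, the $j<J$ tail is of the same order $Q^{-(1-\theta)}$, so there is no gain from lacunarity on that side. This is precisely where the restriction $\theta\ge3/5$ (well away from $0$) and a refined choice should be used.

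My fallback plan is the geometric one: construct the snowflake curve directly as a von Koch-type planar curve. The von Koch construction with a generator of similarity ratio $r$ and $N$ pieces has Hausdorff dimension $\log N/\log(1/r)=1/\theta$. Arclength-by-measure parametrizations of such self-similar quasiarcs are known to be bi-H\"older with exponent $\theta$, and every $\theta$ with $1/\theta\in[1,5/3]\subset[1,2)$ is achievable by a uniform family of generators (e.g.\ a four-segment generator with peak angle varying continuously). The distortion is then controlled uniformly by the bounded turning property of the generator. I would extend the construction from $[0,1]$ to $\R$ by self-similar blow-up.
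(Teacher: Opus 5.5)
Your primary construction, the planar lacunary series $F(t)=\sum_{j\in\Z}Q^{-\theta j}(e^{\i Q^jt}-1)$, has a genuine gap at the lower bound, where you flag it. The estimates you wrote do not close. Put $u=Q^J|s-t|\in(Q^{-1},1]$ and normalize by $Q^{-\theta J}$:
\begin{itemize}
\item the $j=J$ term has size $\asymp u$;
\item the $j<J$ terms are only bounded by $u/(Q^{1-\theta}-1)$, which is comparable to the main term unless $Q^{1-\theta}$ exceeds a fixed constant;
\item the $j>J$ terms have total size up to $\asymp Q^{-\theta}$ (the $(J+1)$-st alone is generically that large), so they can beat the main term on $Q^{-1}<u\lesssim Q^{-\theta}$, a nonempty range because $\theta<1$.
\end{itemize}
Letting the $(J+1)$-st term dominate there does not help. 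That term vanishes when $Q^{J+1}(s-t)\in 2\pi\Z$, and in the new normalization it is at most $2$, while the available bound on the $j\le J$ terms is $Q^{J+1}|s-t|/(Q^{1-\theta}-1)$. A short computation shows that, with these bounds, the range of $u$ where the $(J+1)$-st term provably dominates never meets the range where the $J$-th term does, whatever $Q$ is. With a single planar term per scale there is no slack for a triangle-inequality argument; this is why Lemma~\ref{fact:abstract} spreads the scales over three separate blocks of coordinates. So nothing in the proposal shows that this curve is even injective, let alone uniformly bi-H\"older.

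The upper bound is not uniform either. For fixed $Q$ its constant is $\asymp 1/(Q^{1-\theta}-1)$, which blows up as $\theta\to1^-$; at $\theta=1$ the series diverges, since $Q^{-j}(e^{\i Q^jt}-1)\to\i t$ as $j\to-\infty$. Noting that the identity handles $\theta=1$ gives nothing for $\theta$ near $1$, whereas the proposition requires a distortion bound uniform over $[\frac35,1]$.

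Your fallback is the right proof, and it is exactly the route the paper takes. The paper does not prove Proposition~\ref{prop:R2} itself; it invokes Glasner's adaptation of von Koch's construction, with the uniform bound $O(1/(2\theta-1))$ worked out in \cite{DH99,HM12}. Make this the proof rather than a fallback. It also removes the $\theta\to1$ problem: take the generator $0,\ r,\ \frac12+\i\sqrt{r-\frac14},\ 1-r,\ 1$ with $r=4^{-\theta}\in[\frac14,4^{-3/5}]$. At $\theta=1$ the curve is the segment $[0,1]$ with its identity parametrization, and the family is uniform over the whole range.

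The step you leave to ``known'' results, and the one \cite{DH99,HM12} actually verify, is the separation estimate:
\begin{itemize}
\item The curve lies in the isosceles triangle over $[0,1]$ with apex at the peak, with base angle $\beta$ where $\tan\beta=2\sqrt{r-\frac14}$. The segment from $r$ to the peak makes angle exactly $2\beta$ with the real axis, which is what makes this triangle invariant under the four similarities.
\item Adjacent first-level images of the triangle meet only at their common junction, inside cones separated by angle $\pi-4\beta$.
\item Non-adjacent images are at distance at least $\frac12-r$.
\end{itemize}
Both quantities are bounded below because $r\le 4^{-3/5}<\frac12$. A standard induction then gives distance $\gtrsim r^n=4^{-n\theta}$ between non-adjacent level-$n$ subarcs, hence $|\gamma(s)-\gamma(t)|\asymp|s-t|^\theta$ for the self-similar parametrization $\gamma$ of $[0,1]$.

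Finally, blowing up at $0$ via $z\mapsto z/r$ only produces a ray. Either glue it to its point reflection (the two rays lie in the sectors $0\le\arg z\le\beta$ and $\pi\le\arg z\le\pi+\beta$), or use the rescaling and Arzel\`a--Ascoli argument in the paper's footnote.
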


Proposition~\ref{prop:R2} is a well-known result arising from classical investigations on fractal curves; see also Remark~\ref{rem:theta distortion} below. As explained in~\cite{Gla58}, the von Koch snowflake construction~\cite{Koc06} yields Proposition~\ref{prop:R2} when $\theta=(\log 3)/\log 4$.~\footnote{Formally, \cite{Gla58}, as well as other literature on this topic, including e.g.~the works~\cite{Assouad,DH99,HM12} cited herein,  constructs an embedding of the $\theta$-snowflake of a closed interval, say,  $[-1,1]\subset \R$,  into $\R^2$. One can quickly pass as follows from such a statement to an embedding of the $\theta$-snowflake of the entire line $\R$ into $\R^2$. Considering  $f:[-1,1]\to \R^2$ that satisfies $|s-t|^\theta \le \|f(s)-f(t)\|_2\le D|s-t|^\theta$ for every $s,t\in [-1,1]$ and some $1\le D<\infty$, as well as $f(0)=0$, we may suppose that $f$ is defined continuously on  $\R$  by setting it to be constant on $\R\setminus [-1,1]$. The functions $\{(s\in \R)\mapsto n^\theta f(s/n)\}_{n=1}^\infty$ are equicontinuous and uniformly bounded on compact subsets of $\R$, so by Arzela--Ascoli they have a subsequential limit that is an embedding of the $\theta$-snowflake of $\R$ into $\R^2$ with distortion $D$ .}   An inspection of the proof in~\cite{Gla58} shows that it could be adapted mutatis mutandis to show   that for every $1/2<\theta\le 1$, the $\theta$-snowflake of $\R$ embeds into $\R^2$ with distortion $O(1/(2\theta-1))$, thus implying in particular Proposition~\ref{prop:R2}; a detailed justification of the  aforementioned distortion  bound for every $1/2<\theta\le 1$ has been carried out in~\cite{DH99,HM12}, and a more general treatment famously appears in~\cite{Assouad}, though it yields a weaker distortion upper bound of $O(1/(2\theta-1)^2)$, which also suffices for Proposition~\ref{prop:R2}.

\begin{remark}\label{rem:theta distortion}{\em By comparing Hausdorff dimensions one sees that for $0<\theta<1/2$ the $\theta$-snowflake of $\R$ does not admit a bi-Lipschitz embedding into $\R^2$. For $1/2<\theta\le 1$, let $\cc_2(\R^\theta)$ denote the smallest possible distortion of an embedding of the $\theta$-snowflake of $\R$ into $\R^2$. The best-known upper bound on $\cc_2(\R^\theta)$ when $1/2<\theta\le 1$ is the  aforementioned $\cc_2(\R^\theta)=O(1/(2\theta-1))$. At the endpoint case $\theta=1/2$, Hausdorff dimension is not an obstruction to the possibility that the $(1/2)$-snowflake of $\R$ admits a bi-Lipschitz embedding into $\R^2$. However,  it was proved in~\cite{BS61} by  more subtle  considerations that the   $(1/2)$-snowflake of $\R$ does not admit a bi-Lipschitz embedding into $\R^2$, whence $\lim_{\theta\to 0^+}  \cc_2(\R^{1/2+\e})=\infty$. Information on the rate at which this occurs does not seem to follow from~\cite{BS61}, but~\cite{Bro67} found a quantitative enhancement, from which one gets (by inspecting its proof; see specifically Lemma~3 there) that $\cc_2(\R^{1/2+\e})\gtrsim 1/\sqrt{\e\log(1/\e)}$ for every $0<\e\le 1/2$. The  best-known lower bound here is $\cc_2(\R^{1/2+\e})\gtrsim 1/\sqrt{\e}$, as seen  by combining the proof of~\cite[Theorem~4.1]{Vai81}  with~\cite[Theorem~2.1]{KP97}. In summary,  the best-available bounds here are: 
\begin{equation}\label{snowflake distortion}
\forall 0<\e\le \frac12,\qquad \frac{1}{\sqrt{\e}}\lesssim \cc_{\R^2}\big(\R^{\frac12+\e}\big)\lesssim \frac{1}{\e}.
\end{equation}
Determining the rate at which $\lim_{\e\to 0^+}\cc_2(\R^{\frac12+\e})=\infty$ remains  an interesting open question.}
\end{remark}

The following lemma strengthens~\cite[Remark~5.10]{MN04} in terms of the target dimension, though the embedding now has distortion $O(1)$ rather than being isometric; its special case $p\ge 2=q$ is  the   $\theta=2/p$ special case of the second part of Theorem~\ref{thm:transition}.

\begin{lemma}\label{lem:dimesnion version of MN} If $p\ge q\ge 1$ and $k\in \N$, then the $\frac{q}{p}$-snowflake of $\ell_q^k$ embeds with distortion $O(1)$ into $\ell_p^{O(pk/q)}$.
\end{lemma}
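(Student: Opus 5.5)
We will embed coordinatewise and then glue the coordinates together in $\ell_p$. Since $\|x-y\|_q^{q/p}=\bigl(\sum_{j=1}^k|x_j-y_j|^q\bigr)^{1/p}$, it suffices to find $m\lesssim p/q$ and a map $f:\R\to\ell_p^m$ with
\[
\forall s,t\in \R,\qquad |s-t|^{q/p}\lesssim \|f(s)-f(t)\|_p\lesssim |s-t|^{q/p},
\]
with universal implicit constants. Given such $f$, we set $F(x)=(f(x_1),\ldots,f(x_k))\in \ell_p^{km}$. Then
\[
\|F(x)-F(y)\|_p^p=\sum_{j=1}^k\|f(x_j)-f(y_j)\|_p^p\asymp^{p}\sum_{j=1}^k|x_j-y_j|^q=\|x-y\|_q^q.
\]
Taking $p$-th roots gives distortion $O(1)$, because the constants $c^p$ become $c$ after the root. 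Thus the lemma reduces to embedding the $\theta$-snowflake of $\R$, with $\theta=q/p\in(0,1]$, into $\ell_p^{O(1/\theta)}$ with distortion $O(1)$.

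We split into two cases.

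\emph{Case $\theta\ge 3/5$.} By Proposition~\ref{prop:R2}, the $\theta$-snowflake of $\R$ embeds into $\ell_2^2$ with distortion $O(1)$. On $\R^2$, $\|\cdot\|_p$ and $\|\cdot\|_2$ agree up to a factor $2$, so the same map embeds it into $\ell_p^2$ with distortion $O(1)$, and $2\lesssim 1/\theta$.

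\emph{Case $\theta<3/5$.} We apply Proposition~\ref{prop:Xcubed} to $\MM=\R$, which is $K$-doubling with $K=3$, say. Let $\X=\ell_p^{n}$ with $n=\lceil A_3/\theta\rceil$. This embeds the $\theta$-snowflake of $\R$ into $\X^3$ with distortion $D_3=O(1)$, where $\X^3$ carries the $\max$ norm. The ambient space $\R^{3n}$ is the same as that of $\ell_p^{3n}$, and the $\max$-of-three-$\ell_p$ norm is within a factor $3^{1/p}\le 3$ of the $\ell_p^{3n}$ norm. Hence we obtain an embedding into $\ell_p^{3n}$ with distortion $O(1)$ and $3n\lesssim 1/\theta=p/q$.

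Combining the cases, $m\lesssim p/q$, and the target dimension is $km\lesssim pk/q$, as claimed. The constants in Propositions~\ref{prop:Xcubed} and~\ref{prop:R2} are independent of $p$, since they hold for any normed space or are Euclidean. The only points needing care are the following.
\begin{itemize}
\item The distortion of the glued map must not pick up a factor depending on $k$ or $p$. This holds because both the upper and the lower estimates are additive in $p$-th powers.
\item We must normalize $f$ so that the scaling factor $s$ in the definition of distortion is the same for all coordinates. This is automatic, since the same $f$ is used for every coordinate.
\end{itemize}
We do not anticipate a genuine obstacle beyond this bookkeeping.
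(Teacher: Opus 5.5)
Your proposal is correct and follows the paper's proof: it builds a map $\R\to\ell_p^{O(p/q)}$ that embeds the $\frac{q}{p}$-snowflake of $\R$ with distortion $O(1)$, via Proposition~\ref{prop:Xcubed} for $q/p\le 3/5$ and Proposition~\ref{prop:R2} for $q/p\ge 3/5$, and then applies it coordinatewise, using that $p$-th powers add up to $\|x-y\|_q^q$. Your write-up also spells out the norm comparisons that the paper leaves implicit, namely $\ell_2^2$ versus $\ell_p^2$ and $\X^3$ versus $\ell_p^{3n}$.
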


\begin{proof} By combining Proposition~\ref{prop:Xcubed} and Proposition~\ref{prop:R2} (for, respectively, the ranges $0<\theta=q/p\le 3/5$ and $3/5\le \theta=q/p\le 1$), we get an integer  $2\le m_{p,q}\lesssim p/q$ and a mapping $h_{p,q}:\R\to \R^{m_{p,q}}$ such that: 
\begin{equation}\label{eq:our hpq}
\forall s,t\in \R,\qquad \|h_{p,q}(s)-h_{p,q}(t)\|_p\asymp |s-t|^{\frac{q}{p}}. 
\end{equation}
Now simply consider the embedding  $f_{p,q}:\R^k\to (\R^{m_{p,q}})^k\cong \R^{km_{p,q}}$ that applies $h_{p,q}$ coordinate-wise:
\begin{equation}\label{eq:hpq tensor}
\forall x=(x_,\ldots,x_k)\in \R^k,\qquad f_{p,q}(x)\eqdef \big(h_{p,q}(x_1),\ldots,h_{p,q}(x_k)\big).
\end{equation}
Through the natural identification of $(\ell_p^{m_{p,q}})^{k}$ with $\ell_p^{km_{p,q}}$, any $x=(x_1,\ldots,x_k),y=(y_1,\ldots,y_k)\in \R^k$ satisfy: 
\begin{equation*}
\|f_{p,q}(x)-f_{p,q}(y)\|_{p}\stackrel{\eqref{eq:hpq tensor}}{=} \Big(\sum_{i=1}^k \|h_{p,q}(x_i)-h_{p,q}(y_i)\|_{p}^p\Big)^{\frac{1}{p}}\stackrel{\eqref{eq:our hpq}}{\asymp}\Big(\sum_{i=1}^k |x_i-y_i|^q\Big)^{\frac{1}{p}}=\|x-y\|_q^{\frac{q}{p}}. \tag*{\qedhere}
\end{equation*}
\end{proof}

The proof of the second part of Theorem~\ref{thm:transition}  composes the embeddings of Proposition~\ref{prop:Xcubed} and Proposition~\ref{prop:R2} with transformations that are  taken from (substantial) results in the literature on linear embeddings (covered in the survey in~\cite{JS01}); what we need for this is summarized in the following theorem:

\begin{theorem}\label{thm:FLM}  Suppose that   $0<q<\infty$ and $d\in \N$. There exists $n\in \N$ satisfying $n\lesssim d$ if $0<q\le 2$ and $n\lesssim_q d^{q/2}$ if $2<q<\infty$, and a linear mapping $T=T_{d,q}:\R^d\to \R^n$ such that:
\begin{equation}\label{eq:FLM}
\forall x\in \R^d,\qquad \|x\|_2\le \|Tx\|_q\lesssim e^{O\big(\frac{1}{q}\big)}\|x\|_2.
\end{equation}
\end{theorem}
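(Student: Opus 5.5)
The plan is not to prove Theorem~\ref{thm:FLM} from scratch. It is a repackaging of known linear embedding results, so I would split into the ranges $2<q<\infty$, $1\le q\le 2$ and $0<q<1$, and in each range reduce to a standard theorem. Normalization is never an issue: the lower bound $\|x\|_2\le\|Tx\|_q$ is arranged by rescaling $T$. What has to be controlled is the ratio of the upper to the lower constant, and it must be $e^{O(1/q)}$ uniformly in $q$.

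For $2<q<\infty$ I would use Milman's proof of Dvoretzky's theorem, as quantified by Figiel--Lindenstrauss--Milman~\cite{FLM}. Let $G$ be a standard Gaussian vector in $\R^n$.
\begin{itemize}
\item The norm $\|\cdot\|_q$ on $\R^n$ is $1$-Lipschitz with respect to $\|\cdot\|_2$.
\item Its Gaussian mean satisfies $\mathbb{E}\|G\|_q\asymp \sqrt{q}\,n^{1/q}$ for $q\le \log n$.
\end{itemize}
By Gaussian concentration on $S^{n-1}$ combined with a net argument, a random $d$-dimensional subspace is $2$-Euclidean in $\ell_q^n$ as soon as $d\lesssim (\mathbb{E}\|G\|_q)^2\asymp q\,n^{2/q}$. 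Hence $n\asymp_q d^{q/2}$ suffices. Concretely, $T$ is a suitably scaled $n\times d$ Gaussian matrix, and the Euclidean structure of the random subspace gives \eqref{eq:FLM} with a constant of order $1$. If $q>\log n$, then $\ell_q^n$ is $O(1)$-isomorphic to $\ell_\infty^n$, and one can instead use a $1/2$-net of $S^{d-1}$ of size $e^{O(d)}\le d^{q/2}$ directly.

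For $1\le q\le 2$ I would invoke Kashin's theorem~\cite{Kas77}: $\ell_2^d$ embeds with distortion $O(1)$ into $\ell_1^{2d}$. For $1<q\le 2$, take the same random $\pm1$ (or Gaussian) matrix $T:\R^d\to\R^{2d}$ that works for $q=1$. Its lower bound transfers from $\ell_1$ via Hölder, $\|y\|_1\le n^{1-1/q}\|y\|_q$. Its upper bound into $\ell_q$ follows from $\|y\|_q\le \|y\|_2\, n^{1/q-1/2}$ together with the operator-norm bound $\|T\|_{\ell_2\to\ell_2}\lesssim \sqrt{n}$. With the normalization $n^{-1/q}T$, the two bounds match up to a constant.

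For $0<q<1$ I would rely on the quasi-Banach versions~\cite{Vyb08,FPRU10}. Their Gelfand width estimates for $\ell_q$-balls, equivalently the $\ell_2\to\ell_q$ lower bound for random matrices with $n\asymp d$, are shown via a net argument in which the $q$-triangle inequality $\|a+b\|_q^q\le\|a\|_q^q+\|b\|_q^q$ replaces the triangle inequality. This produces a distortion of the form $C^{1/q}=e^{O(1/q)}$, which is exactly the constant recorded in \eqref{eq:FLM}.

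I expect the main obstacle to be this last regime. There one must check that the net-plus-concentration argument loses only a factor $e^{O(1/q)}$, rather than something worse, with $n\lesssim d$ uniformly. The key point is that the net step must be run in the $q$-th power, bounding $\|Tx\|_q^q$ on a net of $S^{d-1}$ and then extending by $q$-subadditivity, so that the net cardinality enters only through $e^{O(d)}$ and the geometric series of approximations contributes a $C^{1/q}$ loss.
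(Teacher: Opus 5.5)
For $q\ge 1$ your plan is fine. The case $2<q<\infty$ is the same citation of \cite{FLM} that the paper uses, and for $1\le q\le 2$ your Kashin-plus-H\"older interpolation is a correct substitute.

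\textbf{The gap: the case $0<q<1$.} This is the only part of the theorem that is not a verbatim citation, and its $e^{O(1/q)}$ constant is exactly what Lemma~\ref{cor:euclidean arbitrary snowflake} needs (with $q=2\theta$). A bare appeal to \cite{Vyb08,FPRU10} does not settle it, because the $q$-dependence is not stated there. The mechanism you propose for extracting that dependence is wrong, for two reasons.
\begin{enumerate}
\item \emph{The upper bound.} With a net of fixed mesh $\e$ (cardinality $e^{O(d)}$), chaining plus $q$-subadditivity gives at best $\|Tx\|_q^q\le (1-\e^q)^{-1}\max_{y\in\mathrm{net}}\|Ty\|_q^q$. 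Here $(1-\e^q)^{-1/q}\approx (q\log(1/\e))^{-1/q}=e^{\Theta(\frac1q\log\frac1q)}$, which is not of the form $A^{1/q}$.
\item \emph{The lower bound, which is worse.} The extension $\|Tx\|_q^q\ge \|Ty\|_q^q-\e^q\sup_{S^{d-1}}\|Tz\|_q^q$ can certify $\min_{S^{d-1}}\|Tx\|_q^q\ge A^{-1}\sup_{S^{d-1}}\|Tz\|_q^q$ (distortion $A^{1/q}$) only if $\e^q\le 1-A^{-1}$. This is forced because $\min_{\mathrm{net}}\|Ty\|_q^q\le\sup_{S^{d-1}}\|Tz\|_q^q$. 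Hence $\e\le e^{-\Omega(1/q)}$, and the net must have cardinality $e^{\Omega(d/q)}$, the opposite of what you assert.
\end{enumerate}
A net argument can be rescued, but it needs three changes: mesh $e^{-O(1/q)}$; the direct bound $\|Tz\|_q\le n^{1/q-1/2}\|T\|_{\ell_2\to\ell_2}\|z\|_2$ in place of a geometric series; and, for a Gaussian matrix, the small-ball estimate $\mathbb{P}(|g|\le s)\le s$. That estimate makes each net point fail with probability $e^{-\Omega(n/q)}$, which beats the $e^{O(d/q)}$ net size once $n$ is a large constant multiple of $d$. This is a different argument with an extra ingredient, and your proposal does not contain it.

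\textbf{The missing idea.} No new probabilistic input is needed below $q=1$. The H\"older trick you used for $1<q\le 2$ also works by \emph{extrapolation}, and this is the paper's proof.
\begin{enumerate}
\item Take the Kashin subspace $E\subset\R^K$, with $K\lesssim d$, on which $\|y\|_1\ge\frac{\sqrt K}{C}\|y\|_2$.
\item Apply H\"older in the form $\|y\|_1\le\|y\|_q^{q/(2-q)}\|y\|_2^{2(1-q)/(2-q)}$. This gives $\|y\|_q\ge(\sqrt K/C)^{2/q-1}\|y\|_2$ for every $y\in E$.
\item Trivially, $\|y\|_q\le K^{1/q-1/2}\|y\|_2$.
\item Compose with a suitably scaled linear isometry $\R^d\to E$.
\end{enumerate}
This yields \eqref{eq:FLM} with distortion $C^{2/q-1}=e^{O(1/q)}$ and $n=K\lesssim d$, with no net or probabilistic argument at all.
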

When $1\le q <\infty$ the $e^{O(1/q)}$ term in~\eqref{eq:FLM} is $O(1)$, whence  Theorem~\ref{thm:FLM}   for $q$ in this range is due to~\cite{FLM}. If $0<q<1$,  then Theorem~\ref{thm:FLM} with the stated dependence on $q$ (which is needed below) follows from~\cite{Vyb08,FPRU10}. Even though~\cite{Vyb08,FPRU10} produce the desired mapping $T$, this is seen by inspecting the implicit constants that arise from the proofs therein rather than from a statement that is displayed explicitly in~\cite{Vyb08,FPRU10};  while verifying this is merely a mechanical exercise of unraveling notation and bookkeeping, it is worthwhile to include  next a derivation of it via a quick and standard extrapolation argument (examples of its uses for similar purposes can be found in~\cite{JS82,SZ01}) that reduces it to a theorem from~\cite{FLM,Kas77}.

By~\cite{FLM,Kas77} there is a universal constant $C\ge 1$, an integer $K\lesssim d$  and a linear subspace $E\subset\mathbb R^K$ with $\dim E = d$ such that:
\begin{equation}\label{eq:quote kashin} 
\forall y\in E,\qquad  \|y\|_1 \ge \frac{\sqrt{K}}{C}\|y\|_2.
\end{equation}
Since when we equip  $E$ with (any multiple of) the standard Euclidean norm on $\R^K$ we get a $d$-dimensional Hilbert space, there is a linear transformation $T:\R^d\to T$ that satisfies:
\begin{equation}\label{eq:identify with R^k}
\forall x\in \R^d,\qquad \|Tx\|_2=\bigg(\frac{C}{\sqrt{K}}\bigg)^{\frac{2}{q}-1}\|x\|_2.
\end{equation} 
Now, for every $x\in \R^d$ we have:
\begin{equation}\label{eq:use holder}
\bigg(\frac{C}{\sqrt{K}}\bigg)^{\frac{2}{q}-2}\|x\|_2\stackrel{\eqref{eq:identify with R^k}}{=}\frac{\sqrt{K}}{C}\|Tx\|_2\stackrel{\eqref{eq:quote kashin} }{\le}\|Tx\|_1\le \|Tx\|_q^{\frac{q}{2-q}}\|Tx\|_2^{\frac{2(1-q)}{2-q}}\stackrel{\eqref{eq:identify with R^k}}{=}\|Tx\|_q^{\frac{q}{2-q}} \bigg(\frac{C}{\sqrt{K}}\bigg)^{\frac{2}{q}-2}\|x\|_2^{\frac{2(1-q)}{2-q}} ,
\end{equation}
where the third step of~\eqref{eq:use holder} is an instantiation of H\"older's inequality with the (conjugate) exponents $2-q$ and $(2-q)/(1-q)$. By simplifying~\eqref{eq:use holder} we get the first inequality in~\eqref{eq:FLM}. The rest of~\eqref{eq:FLM} holds because:
$$
\|Tx\|_q\le K^{\frac{1}{q}-\frac12}\|Tx\|_2\stackrel{\eqref{eq:identify with R^k}}{=}C^{\frac{2}{q}-1}\|x\|_2,
$$
where we used  H\"older's inequality again, this time  with the conjugate exponents $2/q$ and $2/(2-q)$.

The following consequence of Proposition~\ref{prop:Xcubed}, Proposition~\ref{prop:R2} and Theorem~\ref{thm:FLM} is the  $p=2$ special case of the second part of Theorem~\ref{thm:transition}: 

\begin{lemma}\label{cor:euclidean arbitrary snowflake} For every $k\in \N$ and $0<\theta\le 1$ the $\theta$-snowflake of $\ell_2^k$ embeds with distortion $O(1)$ into $\ell_2^{O(k/\theta)}$.  
\end{lemma}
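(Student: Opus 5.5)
The plan is to compose a linear embedding of $\ell_2^k$ into a low-dimensional $\ell_q$ space, for the exponent $q=2\theta\in(0,2]$, with a coordinate-wise snowflake embedding of $\R$ into Euclidean space. This is exactly the mechanism in the proof of Lemma~\ref{lem:dimesnion version of MN}, but with the quasi-norm $\ell_q$, $q=2\theta$ possibly below $1$. That is where Theorem~\ref{thm:FLM} with its explicit $e^{O(1/q)}$ dependence enters.

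\emph{Step 1 (the one-dimensional building block).} I first produce an integer $m_\theta\lesssim 1/\theta$ and $h_\theta:\R\to\R^{m_\theta}$ with $\|h_\theta(s)-h_\theta(t)\|_2\asymp|s-t|^\theta$ for all $s,t\in\R$, with universal implicit constants.
\begin{itemize}
\item If $3/5\le\theta\le1$, this is Proposition~\ref{prop:R2}, with $m_\theta=2$.
\item If $0<\theta\le 3/5$, apply Proposition~\ref{prop:Xcubed} to the real line, which is $K$-doubling for a universal $K$ (say $K=3$). Take $\X=\ell_2^{\lceil A_K/\theta\rceil}$. This gives an embedding into $\X^3$ with the max-norm and distortion $D_K=O(1)$. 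The max-norm on $\X^3$ is $\sqrt3$-equivalent to the Euclidean norm of $\ell_2^{3\lceil A_K/\theta\rceil}$, so $m_\theta\lesssim 1/\theta$.
\end{itemize}
After rescaling, both cases give a map into $\R^{m_\theta}$ with distortion at most a universal constant.

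\emph{Step 2 (linear step).} Apply Theorem~\ref{thm:FLM} with $d=k$ and $q=2\theta\in(0,2]$. This gives $n\lesssim k$ and a linear $T:\R^k\to\R^n$ with $\|x\|_2\le\|Tx\|_{q}\le e^{O(1/q)}\|x\|_2$.

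\emph{Step 3 (composition).} Define $f:\R^k\to(\R^{m_\theta})^n\cong\R^{nm_\theta}$ by
\[
f(x)\eqdef\big(h_\theta((Tx)_1),\ldots,h_\theta((Tx)_n)\big).
\]
The target dimension is $nm_\theta\lesssim k/\theta$. Squaring the estimate of Step 1 and summing over coordinates, using linearity of $T$, gives
\[
\|f(x)-f(y)\|_2^2\asymp\sum_{i=1}^n|(T(x-y))_i|^{2\theta}=\|T(x-y)\|_{q}^{q}.
\]
The constant in this $\asymp$ is the square of the universal distortion from Step 1, independent of $n$. By Step 2,
\[
\|x-y\|_2^{2\theta}\le\|T(x-y)\|_q^q\le e^{O(1)}\|x-y\|_2^{2\theta},
\]
because raising $e^{O(1/q)}$ to the power $q$ yields $e^{O(1)}$. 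Taking square roots gives $\|f(x)-f(y)\|_2\asymp\|x-y\|_2^\theta$ with universal constants.

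\emph{Main obstacle.} The delicate point is uniformity of the constants as $\theta\to0^+$. The $\ell_q$ distortion in Theorem~\ref{thm:FLM} blows up like $e^{O(1/q)}$, so the argument works only because it enters raised to the power $q$. Likewise the distortion of $h_\theta$ must be bounded independently of $\theta$, which Propositions~\ref{prop:Xcubed} and~\ref{prop:R2} guarantee since $K$ is fixed. Once these two points are in place, the rest is bookkeeping.
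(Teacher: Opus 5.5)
Your proposal is correct and follows the paper's proof essentially step for step. The paper also builds $h_\theta:\R\to\R^{O(1/\theta)}$ from Propositions~\ref{prop:Xcubed} and~\ref{prop:R2}, applies Theorem~\ref{thm:FLM} with $q=2\theta$, and composes coordinatewise, noting (as you do) that the $q$-dependence $e^{O(1/q)}$ is harmless once raised to the power $q$. Your explicit $\sqrt{3}$-comparison between the max-norm on $\X^3$ and the Euclidean norm is a detail the paper leaves implicit.
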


\begin{proof} By combining Proposition~\ref{prop:Xcubed} and Proposition~\ref{prop:R2} (for  $0<\theta\le 3/5$ and $3/5\le \theta\le 1$, respectively), we see that there is an integer  $2\le d_\theta\lesssim 1/\theta$ and a mapping $h_\theta:\R\to \R^{d_\theta}$ such that: 
\begin{equation}\label{eq:2/p h}
\forall s,t\in \R,\qquad \|h_{\theta}(s)-h_\theta (y)\|_2\asymp |s-t|^{\theta}. 
\end{equation}
By  Theorem~\ref{thm:FLM} for $q=2\theta\le 2$ there is an integer $1\le n\lesssim k$ and a  linear mapping $T_\theta: \R^k\to \R^n$ satisfying:
\begin{equation}\label{eq:T theta}
\forall x\in \R^k,\qquad \|T_\theta x\|_{2\theta}^\theta\asymp \|x\|_2^\theta. 
\end{equation}
Note that the dependence on $q$ in~\eqref{eq:FLM} was used crucially here. 

Write $T_\theta x=((T_\theta x)_1,\ldots,(T_\theta x)_n)$ for  $x\in \R^k$. Setting  $m_\theta= nd_\theta\lesssim k/\theta$ define  $f_\theta:\R^k\to (\R^{d_\theta})^n\cong \R^{m_\theta}$ by:
\begin{equation}\label{eq:def f theta}
\forall x=(x_1,\ldots,x_k)\in \R^k,\qquad f_\theta(x)\eqdef \Big(h_\theta\big((T_\theta x)_1\big),\dots,h_\theta \big((T_\theta x)_n\big)\Big).
\end{equation}
Then, $f_\theta$ demonstrates the desired embedding conclusion as every $x,y\in \R^k$ satisfy:
\begin{align*}
\|f_\theta(x)-f_\theta(y)\|_2=\bigg(\sum_{i=1}^n \Big\|&h_\theta\big((T_\theta x)_i\big)-h_\theta\big((T_\theta y)_i\big)\Big\|_2^2\bigg)^{\frac12}\\&\stackrel{\eqref{eq:2/p h}}{\asymp} \bigg(\sum_{i=1}^n \big|(T_\theta x)_i-(T_\theta y)_i\big|^{2\theta}\bigg)^{\frac12}=\|T_\theta x-T_\theta y\|_{2\theta}^\theta\stackrel{\eqref{eq:T theta}}{\asymp} \|x-y\|_2^\theta. \tag*{\qedhere}
\end{align*}
\end{proof}

With the above statements at hand, we can now complete the proof  of Theorem~\ref{thm:transition}:

\begin{proof}[Proof of  the second part of Theorem~\ref{thm:transition}] Fix  $1\le p<\infty$ and $0<\theta\le 1$, as well as $k\in \N$. Our goal is to show that the $\theta$-snowflake of $\ell_2^k$ embeds into $\ell_p^m$ for some integer $m$ that satisfies $m\lesssim_p k/\theta$ if $p\le 2/\theta$, and in the range  $p\ge 2/\theta$, in which case $\theta\asymp_p 1$,  we have  $m\lesssim_p (k/\theta)^{p\theta/2}\asymp k^{p\theta/2}$.

Suppose first that $1\le p\le 2$. Use Lemma~\ref{cor:euclidean arbitrary snowflake}  to get an integer $K\asymp k/\theta$ such that the $\theta$-snowflake of $\ell_2^k$ embeds into $\ell_2^K$ with distortion $O(1)$. By Theorem~\ref{thm:FLM} we know that $\ell_2^K$, whence also the $\theta$-snowflake of $\ell_2^k$, embeds with distortion $O(1)$ into $\ell_p^m$ for some integer $m\asymp K\asymp k/\theta$, as required. 

If $p\ge 2/\theta$ (whence $\theta\asymp_p 1$), then  Theorem~\ref{thm:FLM} for $q=p\theta\ge 2$ shows that $\ell_2^k$ embeds with distortion $O(1)$  into $\ell_{p\theta }^d$ for some integer $d\asymp_{p} k^{p \theta /2}$. By Lemma~\ref{lem:dimesnion version of MN}  the $\theta$-snowflake of $\ell_{p\theta}^d$, whence also the $\theta$-snowflake of $\ell_2^k$, embeds with distortion $O(1)$ into $\ell_p^{m}$ for some integer $m\lesssim d/\theta\asymp_p d\asymp_p k^{p\theta/2}$, as required.

It remains to treat the case  $2\le p\le 2/\theta$.  Because $p\theta/2\le 1$, we may apply Lemma~\ref{cor:euclidean arbitrary snowflake} with $\theta$ replaced by $p\theta/2$ to get an integer $\ell\asymp k/(p\theta)$ and  $f: \R^k\to \R^\ell$ that satisfies:  
\begin{equation}\label{eq:tau snowflake}
\forall x,y\in \R^k,\qquad \|f(x)-f(y)\|_2\asymp \|x-y\|_2^{\frac{p\theta}{2}}.
\end{equation}
As $p\ge 2$,  apply Lemma~\ref{lem:dimesnion version of MN} with $q=2$ to get an integer $1\le m\asymp p\ell\asymp k/\theta$  and $g:\R^\ell\to \R^m$ satisfying:
\begin{equation}\label{eq:2 over p snowflake}
\forall x,y\in \R^\ell,\qquad \|g(x)-g(y)\|_p\asymp \|x-y\|_2^{\frac{2}{p}}.
\end{equation}
The composition $g\circ f:\R^k\to \R^m$ now satisfies the desired conclusion as follows: 
\begin{equation*}
\forall x,y\in \R^k,\qquad \|g\circ f(x)-g\circ f(y)\|_p\stackrel{\eqref{eq:2 over p snowflake}}{\asymp} \|f(x)-f(y)\|_2^{\frac{2}{p}} \stackrel{\eqref{eq:tau snowflake}}{\asymp} \|x-y\|_2^\theta. \tag*{\qedhere}
\end{equation*}
\end{proof}

\section{On the sharp Assouad problem}\label{sec:sharp assouad}

Fix $K\in \N$ and  $0<\theta<1$. By Assouad's  embedding theorem~\cite{Assouad}, the $\theta$-snowflake of any $K$-doubling metric space admits a bi-Lipschitz embedding into some Euclidean space, where the distortion and dimension depend only on $K$ and $\theta$. When $1/2\le \theta<1$, by~\cite{NN} (see also~\cite{DS13} for a different proof), the aforementioned target dimension can be taken to be independent of $\theta$, but for small $\theta$ comparison of Hausdorff dimensions shows that the smallest target dimension that one could hope for in Assouad's embedding theorem is at least of order  $(\log K)/\theta$. The order of magnitude of the smallest possible distortion that one could achieve in Assouad's embedding theorem if one requires the target dimension to be of that smallest possible order of magnitude is the content of the following conjecture: 

\begin{conjecture}\label{conj:sharp assouad} For every integer $K\ge 3$ and every $0<\theta<1$, the $\theta$-snowflake of any $K$-doubling metric space embeds with distortion $D$ into $\ell_2^n$, where $D\ge 1$ and $n\in \N$ satisfy:  
\begin{equation}\label{eq:sharp sharp assouad}
n\asymp \frac{\log K}{\theta}\qquad\mathrm{and}\qquad D\asymp \frac{(\log K)^\theta}{\sqrt{1-\theta}}\asymp \left\{\begin{array}{ll}1&\mathrm{if\ } 0<\theta\le \frac{1}{\log \log K},\\
\frac{(\log K)^\theta}{\sqrt{1-\theta}} &\mathrm{if\ } \frac{1}{\log \log K}\le \theta \le 1-\frac{1}{\log K},\\
\frac{\log K}{\sqrt{1-\theta}}&\mathrm{if\ } 1-\frac{1}{\log K}\le \theta<1.\end{array}\right.
\end{equation}
Furthermore, this statement cannot be improved up to the values of the implicit universal constants in~\eqref{eq:sharp sharp assouad}. 
\end{conjecture}

Unlike  the sharp Lang--Plaut problem, in~\eqref{eq:sharp sharp assouad} the distortion must depend on $K$ and $\theta$ because Conjecture~\eqref{conj:sharp assouad} treats all $K$-doubling metric spaces,  including those that do not embed (with any distortion) into Hilbert space (such spaces exist even when $K=O(1)$ by~\cite{Pan89,Sem99}). Nevertheless,  for $0<\theta\lesssim 1/\log\log K$,  Conjecture~\eqref{conj:sharp assouad} does imply that the $\theta$-snowflake   of any $K$-doubling metric space embeds with distortion $O(1)$ into $\ell_2^n$ of the optimal dimension  $n\lesssim (\log K)/\theta$, and furthermore, this holds only for  $\theta$ in this range.  That conclusion is not conjectural, thanks to the following remark, which describes the best bounds that we currently have towards Conjecture~\eqref{conj:sharp assouad}:

\begin{remark}\label{rem:known sharp assouad} {\em For every integer  $K\ge 3$ and every  $0<\theta,\d< 1$, any $K$-doubling metric space embeds with distortion $D$ into $\ell_2^n$, where $D\ge 1$ and $n\in \N$ satisfy:  
\begin{equation}\label{eq:known upper sharp assouad}
n\asymp \frac{\log K}{\d\theta}\qquad\mathrm{and}\qquad  D\lesssim \left\{ \begin{array}{ll} 1 &\mathrm{if\ }\ 0<\theta\le \frac{1}{\log\log K},\\ \big((\log K)\log\log K\big)^{(1+\d)\theta} &\mathrm{if\ }\  \frac{1}{\log\log K}\le \theta\le 1-\frac{1}{\log \log K},\\
\left(\frac{\log K}{1-\theta}\right)^{1+\d}&\mathrm{if\ }\  1-\frac{1}{\log \log K}\le \theta<1.\end{array}\right.
\end{equation}
To see why this holds, fix  $\max\{\theta,1/2\}<\tau<1$. By~\cite{NN} the $\tau$-snowflake of any $K$-doubling metric space  embeds with distortion  $D_0 \asymp ((\log K)/(1-\tau))^{1+\d}$ into $\ell_2^m$, where  $m\lesssim (\log K)/\d$.  As $0<\theta/\tau\le 1$, we may use  Lemma~\ref{cor:euclidean arbitrary snowflake} to embed the $(\theta/\tau)$-snowflake of $\ell_2^m$ into $\ell_2^n$, where $n\in \N$ satisfies $n\lesssim m\tau/\theta\asymp (\log K)/(\d\theta)$.  By composing these two embeddings, we see that a $K$-doubling metric space  embeds into $\ell_2^n$ with distortion  $D=D_0^{\theta/\tau}\asymp ((\log K)/(1-\tau))^{(1+\d)\theta/\tau}$. By choosing the $\tau$ that minimizes the latter distortion, we get~\eqref{eq:known upper sharp assouad}.

Conversely, in the context of Conjecture~\ref{conj:sharp assouad} we must have $D\gtrsim (\log K)^\theta$, as seen by considering expander graphs and reasoning as in~\cite{LLR95}; this demonstrates  the aforementioned assertion that $D=O(1)$ is impossible unless $0<\theta\lesssim 1/\log\log K$. By~\cite{LMN}, in the context of Conjecture~\ref{conj:sharp assouad}  we must also have $D\gtrsim 1/\sqrt{1-\theta}$. In fact, for arbitrarily large  $K\in \N$ there is a metric space $\MM_K$ such that for every $0<\theta<1$, if the $\theta$-snowflake of $\MM_K$ embeds with distortion $D\ge 1$ into an infinite dimensional Hilbert space, then: 
\begin{equation}\label{eq:with log log}
D\gtrsim \frac{(\log K)^\theta}{\sqrt{\log \log K}}\cdot \frac{1}{\sqrt{1-\theta}}.
\end{equation}
Indeed, the example that is constructed in~\cite{JLM09} has this property; while~\cite{JLM09} treats only bi-Lipschitz mappings, a verbatim repetition of the analysis in~\cite{JLM09} yields the following bound, which is stronger than~\eqref{eq:with log log}:
\begin{align}\label{eq:cases log log}
D&\gtrsim \left\{\begin{array}{ll}1&\mathrm{if\ } 0<\theta\le \frac{1}{\log \log K},\\
(\log K)^\theta &\mathrm{if\ } \frac{1}{\log \log K}\le \theta\le 1-\frac{1}{\log \log K},\\
\frac{\log K}{\sqrt{(1-\theta)\log \log K}}&\mathrm{if\ } 1-\frac{1}{\log \log K}\le \theta <1.\end{array}\right.
\end{align}

Conceivably \eqref{eq:with log log}  could be improved to $D\gtrsim (\log K)^\theta/\sqrt{1-\theta}$, but Conjecture~\ref{conj:sharp assouad} posits less, namely, it asks for such a distortion lower bound (for any $K$-doubling metric space, not necessarily the example of~\cite{JLM09}) under the more stringent requirement to embed into $\ell_2^n$ for $n\lesssim (\log K)/\theta$, while the lower bound~\eqref{eq:cases log log} holds for embedding into $\ell_2$.   
}
\end{remark}

Remark~\ref{rem:known sharp assouad} contains  the best evidence that we  have for Conjecture~\eqref{conj:sharp assouad}. It comes quite close to Conjecture~\eqref{conj:sharp assouad} when $0<\theta<1$ is bounded away from $1$,  but it is much less satisfactory when $\theta\to 1^-$, which is especially important. Specifically, Conjecture~\eqref{conj:sharp assouad} predicts that the distortion remains $O_K(1/\sqrt{1-\theta})$ as $\theta\to 1^-$, which  has been a longstanding open problem even if we relax the requirement that the target dimension is the optimal $O(\log K)$ by allowing it to be any function of $K$ whatsoever (see e.g.~\cite{Tao21,Ryo22,Ryo23}).

\bibliographystyle{abbrv}
\bibliography{noJL}

\end{document}